\newcommand{\R}{\mathbb{R}}
\newcommand{\C}{\mathbb{C}}
\renewcommand{\vec}[1]{\mathbf{#1}}
\newcommand{\Sw}{\mathcal{S}}
\newcommand{\Cminus}{\C_{-}}
\newtheorem{lemma}{Lemma}
\newtheorem{theorem}{Theorem}
\newtheorem{corollary}{Corollary}
\begin{document}

\begin{frontmatter}



\title{Convergence of the Semi-Discrete WaveHoltz Iteration}


\affiliation[kth]{organization={Department of Mathematics, Royal Institute of Technology},
            city={Stockholm},
            country={Sweden}}

\affiliation[vt]{organization={Department of Mathematics, Virginia Tech},
            city={Blacksburg},
            state={VA},
            country={United States}}
            
\author[vt]{Amit Rotem}
\author[kth]{Olof Runborg}
\author[vt]{Daniel Appel\"{o}}

\begin{abstract} 
Solving the Helmholtz equation with iterative methods is challenging because of its indefinite nature and highly oscillatory solutions. 
The WaveHoltz algorithm mitigates the difficulties of solving the Helmholtz equation by repeatedly solving the wave equation over short periods in time.
In this paper, we prove that for stable semi-discretizations of the wave equation, the WaveHoltz iteration converges to an approximate solution of the corresponding frequency-domain problem, provided one exists. We present numerical examples in one and two dimensions using finite difference and discontinuous Galerkin discretizations illustrating these convergence results.
\end{abstract}



\begin{keyword}
WaveHoltz \sep Helmholtz \sep Wave Equation \sep Iterative Methods

\MSC 65N12 \sep 65N22
\end{keyword}

\end{frontmatter}


\section{Introduction}
The wave equation is a fundamental mathematical model for describing the propagation of information in physical phenomena arising in acoustics, electromagnetism, optics, and fluid dynamics. Its numerical simulations are extensively used to advance research and develop new tools in, for example,  communication, imaging, seismic modeling, and engineering.

When simulating the wave equation in the frequency domain, we encounter the Helmholtz equation,
\begin{equation}
   \nabla \cdot (c^2(x) \nabla \hat{u}) + \omega^2 \hat{u} = f(x),\label{eq:HELMHOLTZ} 
\end{equation}
where $c$ is the wave speed and $f$ is a source function.
In this paper we consider \eqref{eq:HELMHOLTZ} 
in $d$ dimensions, posed on a
smooth bounded domain $\Omega\subset{\mathbb R}^d$,
complemented with suitable boundary conditions on $\partial\Omega$. A typical algorithm for approximating solutions to \eqref{eq:HELMHOLTZ} discretizes it using finite differences or finite elements, resulting in a linear system of equations
\[ (L - \omega^2 M) \hat{u}_h = -f_h, \]
for the representation of the approximate solution $\hat{u}_h$. Here $L$ approximates the negative Laplacian ($L \hat{u}_h\approx -\nabla\cdot(c^2 \nabla \hat{u})$) and $M$ is either a mass matrix in the finite element context or the identity in the finite difference context. To this end, the subscript $h$ identifies the quantities that are approximations.  For energy conserving boundary conditions (Dirichlet or Neumann), the matrix $L$ is often symmetric and positive semi-definite. Similarly $M$ is necessarily symmetric positive definite, therefore the matrix $L - \omega^2 M$ is usually indefinite. For boundary conditions which dissipate energy, $L$ may be complex symmetric (not Hermitian) or not symmetric at all.

When $\omega$ is large, the solution is highly oscillatory and it needs to be resolved by many degrees of freedom. To sufficiently resolve the solution by a finite difference or finite element method of order $p$ in $d$ dimensions we require
\begin{equation}\label{eq:homegascale}
     \omega^{p+1}h^p = \mathrm{const.} \implies \#\mathrm{DOFs} \sim O(\omega^{d(p+1)/p}).
\end{equation}
Here $h$ is the scale of the mesh \cite{WDHthumb,Advancements-Iterative-Helmholtz-Erlangga}. For high frequency problems, the number of degrees of freedom can easily exceed tens of thousands in two dimensions, and millions in three dimensions. 

The indefinite nature of the Helmholtz equation makes it challenging for classical iterative algorithms.
Highly efficient iterative methods for the similar (yet definite) Poisson equation,
\[ -\nabla\cdot (c^2(x) \nabla \hat{u}) = f, \]
often fail for the Helmholtz equation. For example, since the Helmholtz equation is indefinite the popular conjugate gradient method is inapplicable so other methods such as GMRES or the biconjugate method must be used instead. Yet still, these methods converge much more slowly for the indefinite problem than for the definite one. Other iterative methods, developed for Poisson's equation, such as Schwartz domain decomposition or multigrid, entirely fail to converge for the Helmholtz equation and fail to substantially improve convergence as preconditioners for GMRES \cite{Helmholtz-Difficult-Ernst-Gander, Advancements-Iterative-Helmholtz-Erlangga}.

To remedy this difficulty, specialized algorithms for the Helmholtz equation have been introduced. For example, the domain decomposition method of Despr\'{e}s which replaces the Dirichlet (or Neumann) condition of the classical Schwartz iteration with impedance conditions \cite{Domain-Decomp-Depres}, or the wave-ray multigrid method of Brandt and Livshits in which the typical smoothing step is replaced by ray-cycles \cite{Wave-Ray-Multigrid-Brandt-Livshits} (see also \cite{Helmholtz-Difficult-Ernst-Gander, Advancements-Iterative-Helmholtz-Erlangga}). Additionally, there have been extensive developments of preconditioning techniques such as ILU factorizations, or methods which solve the complex-shifted Laplacian as a means to control the distribution of the eigenvalues of the preconditioned system. The sweeping preconditioner of Engquist and Ying (see \cite{Sweeping-Precond-Engquist-Ying}) is another example of highly successful specialized algorithm. 

There are also several direct methods for solving the Helmholtz equation, for example the Hierarchically Semi-Separable (HSS) parallel multifrontal sparse solver by deHoop and co-authors, \cite{2011_deHoop_Xia}, and the spectral collocation solver by Gillman, Barnett and Martinsson, \cite{Gillman2015}. Of course, when the material properties are piecewise constant integral equations can be used. These have a long history and a complete literature review is beyond the scope of this introduction and we refer the reader to the review \cite{BEMReview} and its 400+ references. 

An entirely different set of algorithms solve the Helmholtz equation by solving a closely related wave equation in the time domain. Controllability methods, for example, solve the Helmholtz equation by minimizing the difference between the initial condition and the solution of the wave equation after one period ($2\pi/\omega$) in time. The solution to the Helmholtz equation is the initial condition that minimizes this difference, and the minimization problem can be solved with the conjugate gradient method. The original controllability method can be found in  \cite{bristeau1998controllability} and more recent developments and extensions can be found in \cite{grote2019controllability,GlowRoss06,GROTE2020112846}. 

Another iterative method, and the focus of this paper, is the WaveHoltz algorithm which solves the Helmholtz equation by repeatedly filtering the solution to the wave equation in the time domain. The resulting fixed point iteration can be accelerated by Krylov methods, and, in contrast to classical discretizations, the system of equations associated with the fixed point of WaveHoltz is positive definite for energy conserving boundary conditions. When energy is not necessarily conserved, the WaveHoltz iteration still benefits from the efficiency of high performance wave equation solvers.

In the first part of this paper we present a new result which guarantees convergence of the WaveHoltz iteration itself, for any stable semi-discretization of the wave equation, to the approximate solution of the Helmholtz equation, provided it exists and is unique. More precisely, this result requires that the eigenvalues of the discretization matrix have non-positive real parts and are not equal to $\pm i\omega$, with the convergence rate improving as the smallest distance to $\pm i\omega$ increases.
After proving these theoretical results, we will consider a few numerical examples in which these results are realized. First we will investigate one dimensional problems with impedance boundary conditions.
We will investigate the distribution of eigenvalues of the discretization matrix and find that, indeed, the eigenvalues of these matrices are bounded away from $\pm i \omega$.
Afterwards, we will investigate the convergence rates for similar problems in two dimensions.
For results relating to closed domains (energy conserving boundary conditions) we refer the reader to the original paper, \cite{WaveHoltz} and for other extensions of the WaveHoltz approach we refer to \cite{multiWHI,EigenWave,appelo2025optimalonhelmholtzsolver,peng2021emwaveholtz,ElWaveHoltz}.

The rest of the paper is organized as follows. In Section \ref{sec:helmholtz} we will introduce the frequency domain problem and relate it to the time domain problem. We will then define the semi-discrete WaveHoltz iteration and show that the solution to the discretized Helmholtz equation is a fixed point of the iteration. In Section \ref{sec:conv-theory} we prove the convergence result for the semi-discrete WaveHoltz iteration.
Lastly, in Section \ref{sec:numeric} we perform numerical experiments which relate the theoretical results to two common discretizations of the wave equation: a finite difference and a discontinuous Galerkin method.

\section{The Discrete Helmholtz Equation} \label{sec:helmholtz}

In this paper, our goal is to solve discrete Helmholtz equations of the form:
\begin{equation} \label{eq: DISCRETE HELMHOLTZ}
    i\omega \hat{w}_h = A \hat{w}_h - F.
\end{equation}
Here $\hat{w}_h, F\in\C^m$, and $A\in \C^{m\times m}$.
Such equations appear from the discretizations of the Helmholtz equation:
\begin{equation} \label{eq: HELMHOLTZ}
    \nabla \cdot (c^2(x) \nabla \hat{u}) + \omega^2 \hat{u} = f(x).
\end{equation}
Consider the function $w(t) = \hat{w}_h e^{i\omega t}$. If we differentiate $w$ in time we find
\[ w_t = i\omega \hat{w}_h e^{i\omega t} = (A \hat{w} - F)e^{i\omega t}. \]
That is, $w$ satisfies
\begin{equation} \label{eq: SEMI-DISCETE WAVE}
    w_t = Aw - F e^{i\omega t}.
\end{equation}
We refer to \eqref{eq: SEMI-DISCETE WAVE} as the semi-discrete wave equation because it may also be derived from spatial semi-discretization of the wave equation
\begin{equation} \label{eq:WAVE_EQ}
    u_{tt} = \nabla\cdot(c^2(x) \nabla u) - f(x)e^{i\omega t}.
\end{equation}

The wave equation is often easier to solve (for short time ranges) because explicit time-stepping schemes are memory-efficient, implicit methods can be usually accelerated with multigrid, and alternative approaches like parallel-in-time methods have also proven effective \cite{Gander2019-jy, Newmark1962}. The aim of the WaveHoltz algorithm is to 
leverage the efficiency of wave equation solvers to solve the Helmholtz equation.

In Section \ref{sec: semi discrete iteration}, we will introduce the WaveHoltz algorithm as it is applied in a ``semi-discrete'' setting, where we have discretized in space but remain continuous in time. In Section \ref{sec: discrete to continuous} we connect the semi-discrete problem to the target application, the Helmholtz equation by writing the wave equation as a \emph{first order} system in time. Such a formulation is very general because the wave equation naturally appears as a first order system in the context of hyperbolic conservation laws, while second order equations are easily converted to first order by introducing a ``velocity'' variable.

\subsection{The Semi-Discrete WaveHoltz Iteration} \label{sec: semi discrete iteration}
Consider the discrete Helmholtz equation \eqref{eq: DISCRETE HELMHOLTZ}. The WaveHoltz iterative algorithm \cite{WaveHoltz} approximates the solution to the Helmholtz equation by filtering solutions of the time dependent wave equation in the fixed point iteration,
\begin{equation} \label{eq:semi-discrete-iteration}
    \hat{w}_h^{(n+1)} = \Pi_h \hat{w}_h^{(n)},
\end{equation}
where $\Pi_h$ is defined as
\begin{equation} \label{eq:semi-pi}
\Pi \hat{w}_h = \frac{2}{T}\int_0^T \left(\cos(\omega t) - \frac{1}{4}\right) w_h(t) \, dt, \qquad T = \frac{2\pi}{\omega},
\end{equation}
and $w_h(t)$ solves the semi-discrete wave equation \eqref{eq: SEMI-DISCETE WAVE} with initial conditions
\begin{gather*}
    w_h(0) = \hat{w}_h.
\end{gather*}
The integration kernel is designed to damp all time harmonic modes of the solution with frequency not equal to $\omega$, while preserving the amplitude of the $\omega$ frequency mode.
To see the latter, let $\hat{w}_h^\star$ be the solution to \eqref{eq: DISCRETE HELMHOLTZ} and set the initial condition to the wave equation as $w_h(0) = \hat{w}_h^\star$, then the solution is $w_h(t) = \hat{w}^\star_h e^{i\omega t}$. Because
\begin{equation} \label{eq:filterunit}
\frac{2}{T}\int_0^T \left(\cos(\omega t) - \frac{1}{4}\right) 
e^{i\omega t}\, dt    =1,
\end{equation}
the integral in \eqref{eq:semi-pi} evaluates to $\hat{w}_h^\star$.
Thus, by construction, the solution $\hat{w}_h^\star$ to the discrete Helmholtz equation corresponds to the fixed-point
\[ \hat{w}_h^\star = \Pi \hat{w}_h^\star. \]
In addition, the operator $\Pi_h$ is affine, so $\Pi_h \hat{w}_h$ can be decomposed as the action of a linear operator $\Sw_h$ plus a fixed vector $\pi_{0,h}$ which depends only on $F$, that is,
\begin{equation} \label{eq:pi-affine}
    \Pi_h \hat{w}_h = \Sw_h \hat{w}_h + \pi_{0,h}, 
\qquad \pi_{0,h} = \Pi_h \vec{0},
\end{equation}
where
\begin{subequations} \label{eq: Sh}
    \begin{gather}
        \Sw_h \hat{w}_h = \frac{2}{T}\int_0^T \left( \cos(\omega t) - \frac{1}{4} \right) w_h(t) \, dt, \\
        \frac{dw_h}{dt} = A w_h, \ \ \ \ w_h(0) = \hat{w}_h. \label{eq:hom ODE}
    \end{gather}
\end{subequations}
Note that wave equation \eqref{eq:hom ODE} here is homogeneous.

\subsection{Connection to the Continuous Problem} \label{sec: discrete to continuous}

To relate the semi-discrete iteration to the solution of \eqref{eq:HELMHOLTZ} and \eqref{eq:WAVE_EQ}, we begin by rewriting equation \eqref{eq:WAVE_EQ} as a first order system in time. We consider two possible representations. Either, we introduce the velocity variable $v$ and write
\begin{subequations} \label{eq:system_A}
    \begin{align}
        u_t &= v, \\
        v_t &= \nabla \cdot (c^2 \nabla u) - f(x) e^{i\omega t}.
    \end{align}
\end{subequations}
Alternatively, we write the wave equation as the linear first order hyperbolic system:
\begin{subequations} \label{eq:system_B}
    \begin{align}
        p_t &+ \nabla \cdot (c^2\mathbf{u}) = -\frac{1}{i\omega}f(x)e^{i\omega t}, \label{eq:system_B pressure} \\
        \mathbf{u}_t &+ \nabla p = 0. \label{eq:system_B velocity}
    \end{align}
\end{subequations}
Formally, observe that differentiating \eqref{eq:system_B pressure} in time, switching the order of differentiation, and substituting \eqref{eq:system_B velocity} we recover
\[ p_{tt} = \nabla\cdot(c^2\nabla p) - f(x)e^{i\omega t}, \]
which is our original wave equation in the variable $p$. Similar arguments can be made for other first order systems (e.g. Maxwell's equations is a notable application, see \cite{peng2021emwaveholtz}), but we will focus on these two systems as representative examples.

In Section \ref{sec:FDM}, we will approximate the system, \eqref{eq:system_A}, with a finite difference method, and in Section \ref{sec:DGM} we will approximate the system, \eqref{eq:system_B}, using a Discontinuous Galerkin method. In both cases, once the equations have been discretized in space they can be written as a first order system of ordinary differential equations.

For example, let $w_h \in \C^{m}$ be the spatial degrees of freedom which collectively represents approximations to both $u$ and $v$ in system \eqref{eq:system_A} or $p$ and $\mathbf{u}$ in system \eqref{eq:system_B}. The semi-discretization of the wave equation (on either form) can then be written
\begin{equation} \label{eq:SEMIDISCRETE_WAVE_EQ}
   \frac{dw_h}{dt} = A w_h - Fe^{i\omega t}.
\end{equation}
Here $A\in\C^{m\times m}$ is an approximation of the spatial 
operators 
\begin{equation}\label{eq:firstorderops}
    \begin{pmatrix}
0 & I \\
  \nabla\cdot c^2 \nabla & 0\\
\end{pmatrix} \qquad\text{or}\qquad
\begin{pmatrix}
0 & -\nabla\cdot c^2 \\
  -\nabla & 0\\
\end{pmatrix},
\end{equation}
corresponding to \eqref{eq:system_A} and \eqref{eq:system_B},
and $F\in\C^m$ contains an approximation of 
$$
\begin{pmatrix}
0\\
f
\end{pmatrix} \qquad\text{or}\qquad
\begin{pmatrix}
\frac{1}{i\omega}f \\
  0\\
\end{pmatrix}.
$$
More details are given below in Section \ref{sec:numeric} and in \ref{apx:real-valued-wh}.

\subsection{Aside: WaveHoltz, Helmholtz and Their Relation} \label{sec:aside}
As an aside, we note that the applying the WaveHoltz iteration may be thought of as applying a particular preconditioner to the Helmholtz problem \eqref{eq: DISCRETE HELMHOLTZ}. To see this, note that the solution to \eqref{eq: SEMI-DISCETE WAVE} with initial condition $w_0$ has the exact solution:
\[
w_h(t) = e^{tA} w_0 + (e^{i\omega t} I - e^{tA}) (A - i\omega I)^{-1} F = e^{t A} w_0 + (e^{i\omega t} I - e^{tA}) \hat{w}_h^\star.
\]
Hence, the solution explicitly encodes the solution of the Helmholtz equation \eqref{eq: DISCRETE HELMHOLTZ}. Applying the filter \eqref{eq:semi-pi}, we find that:
\[
\Pi_h w_0 = \mathcal{S}_h w_0 + (I - \mathcal{S}_h) \hat{w}^\star_h = \mathcal{S}_h w_0 + (I - \mathcal{S}_h) (A - i\omega I)^{-1} F.
\]
Here $\mathcal{S}_h$ is previously defined in \eqref{eq: Sh}.
The linear system for the fixed point is therefore
\[
(I - \mathcal{S}_h) \hat{w}_h^\star = (I - \mathcal{S}_h) (A - i\omega I)^{-1} F.
\]
Defining the matrix $\mathcal{G}_h = (I - \mathcal{S}_h) (A - i\omega I)^{-1}$,
it follows that
\[
\mathcal{G}_h  (A - i\omega I) \hat{w}_h^\star = \mathcal{G}_h F.
\]
Thus, using the WaveHoltz method is equivalent to applying a left preconditioner $\mathcal{G}_h$ to the Helmholtz equation
\eqref{eq: DISCRETE HELMHOLTZ}.

\section{Convergence Theory for the Semi-Discrete Iteration} \label{sec:conv-theory}

In this section, we present convergence results for the semi-discrete WaveHoltz iteration \eqref{eq:semi-discrete-iteration}. 
To obtain convergence
we first need to
assume that the discretization
of the semi-discrete problem
\eqref{eq: SEMI-DISCETE WAVE} is stable and that
the discrete Helmholtz equation \eqref{eq: DISCRETE HELMHOLTZ} has a unique solution.
These two conditions
boil down to two
assumptions about the eigenvalues
of the discretization matrix $A$
that appears in both
\eqref{eq: SEMI-DISCETE WAVE}
and
\eqref{eq: DISCRETE HELMHOLTZ},
namely:
\begin{enumerate}[label=(\textbf{A\arabic*})]
    \item\label{assumption:stability} (Stability.)
    The eigenvalues $\{\lambda_j\}_{j=1}^m$ of $A$ have a non-positive real part.
    \item\label{assumption:E+U} (Existence and
    uniqueness.)
    No eigenvalue of $A$
    equals $i\omega$ or $-i\omega$.
\end{enumerate}
To analyze the convergence, 
we first recall that \eqref{eq:semi-discrete-iteration} 
can be written as the linear fixed point iteration, 
$$
\hat{w}_h^{n+1}=\Sw_h \hat{w}^n_h + \pi_{0,h}, 
\qquad \pi_{0,h} = \Pi_h \vec{0},
$$
by (\ref{eq:pi-affine})
with 
$\Sw_h$ defined in \eqref{eq: Sh}. 
For the iteration to converge, 
all eigenvalues of $\Sw_h$ therefore needs to 
lie inside the unit disc in the complex plane.

The main insight is that the eigenvalues $\{\mu_j\}$ of $\Sw_h$ are directly
related to the eigenvalues $\{\lambda_j\}$ of the matrix $A$ via 
a simple function. More precisely, we show below that
\begin{equation} \label{eq: beta def}
\mu_j = \hat\beta\left(\lambda_j / \omega\right), \quad
 \hat{\beta}(z) = \frac{1}{\pi} \int_0^{2\pi} \left( \cos(s) - \frac{1}{4} \right) e^{z s} \, ds. 
\end{equation}
We refer to this function as the {\it scaled filter transfer function}.
It is plotted in Figure~\ref{fig:BETA}.
\begin{figure}[tbp]
    \centering
    \includegraphics[width=0.3\textwidth,trim={0.0cm 0.0cm 0.0cm 0.0cm},clip]{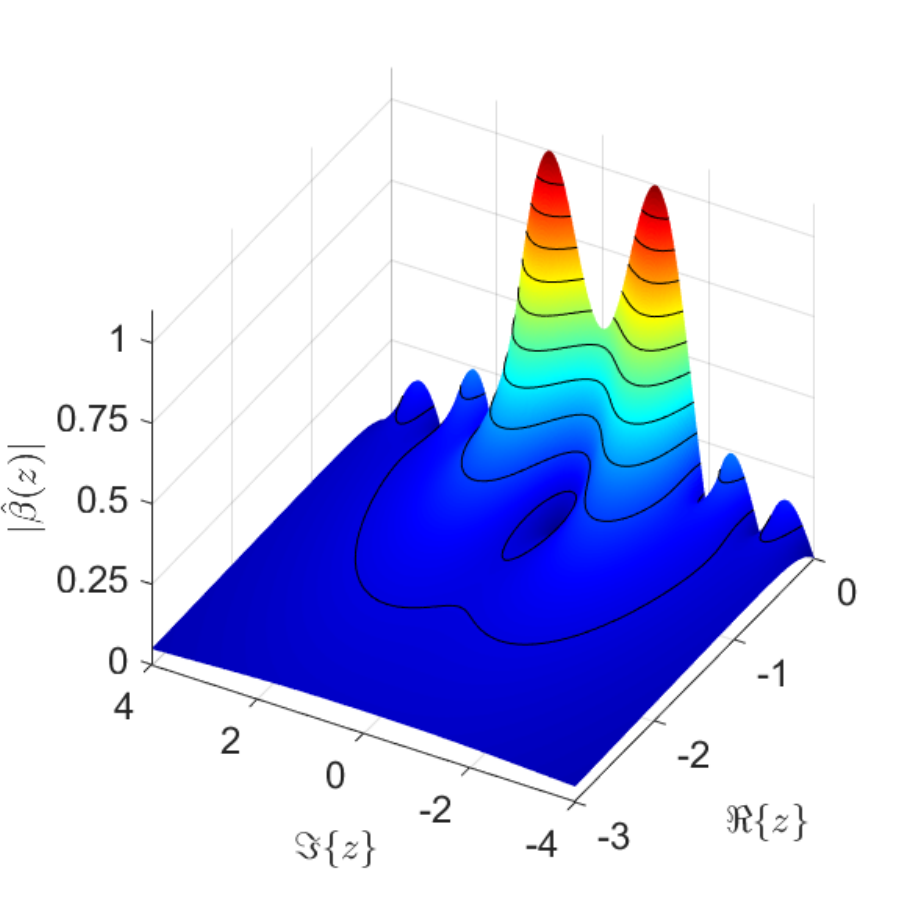}
    \includegraphics[width=0.3\textwidth,trim={0.0cm 0.0cm 0.0cm 0.0cm},clip]{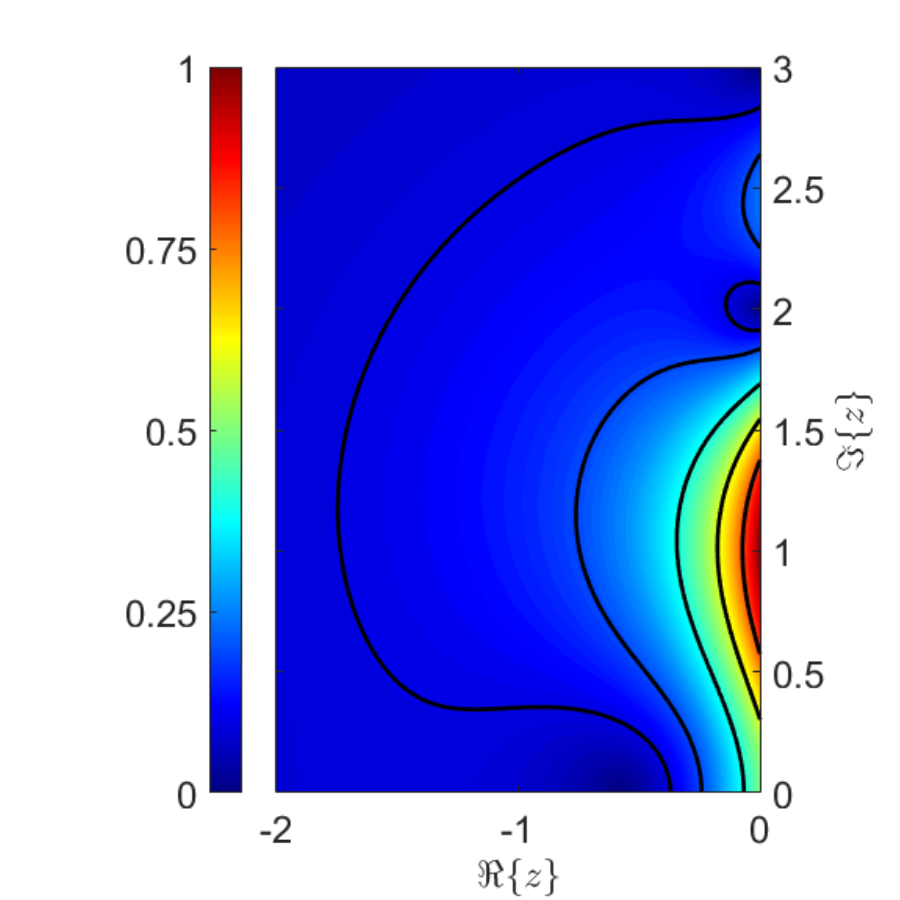}
    \includegraphics[width=0.3\textwidth,trim={0.0cm 0.0cm 0.0cm 0.0cm},clip]{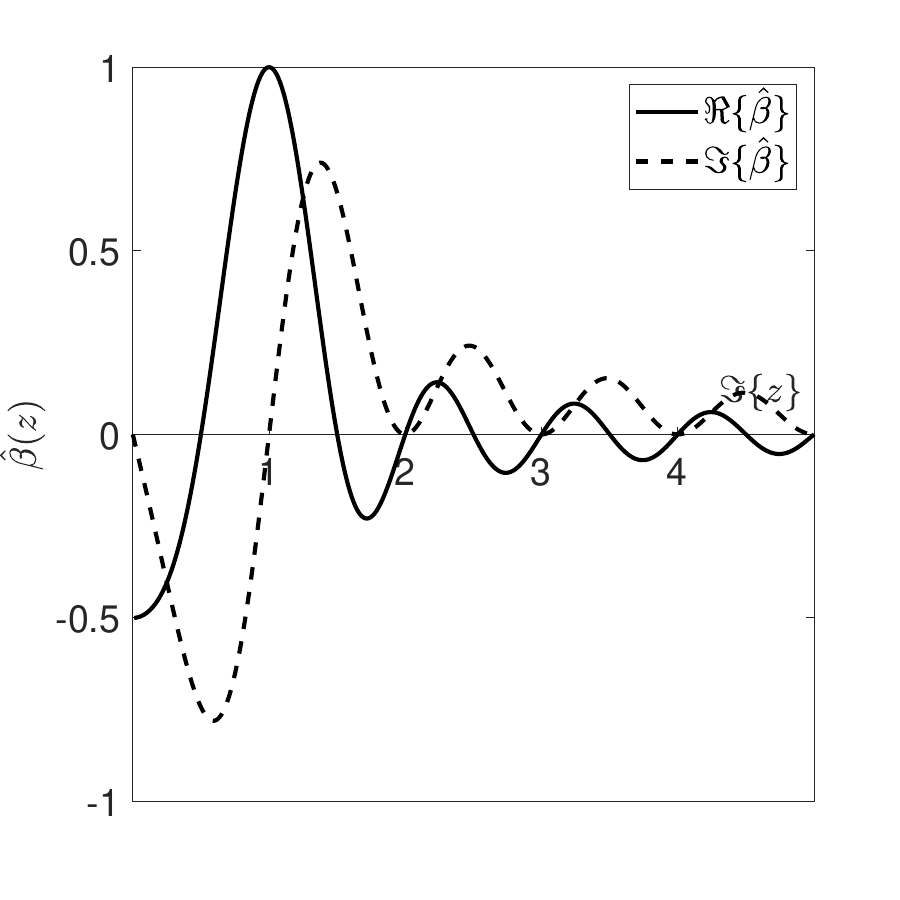}
    \caption{The three figures show the modulus of $\hat{\beta}$ in the left half-plane (left), its level contours near $z=i$ which are approximately parabolic (middle), and its restriction to the imaginary axis (right)
    \label{fig:BETA}}
\end{figure}

In Lemma \ref{lemma: beta complex} we show that 
$\hat\beta$ maps the left half-plane to the unit
disc in the complex plane. Additionally, we show that
$|\hat\beta(\lambda/\omega)|=1$ only when $\lambda=\pm i\omega$.
The implication of this is that the spectral radius
of $\Sw_h$ is strictly less than one
if all eigenvalues of $A$
lie in the left halfplane, away from $\pm i\omega$.
Or, put another way, the iteration converges if \ref{assumption:stability} and \ref{assumption:E+U} hold.

The {\it rate} of convergence is given by the 
magnitude of the largest eigenvalue
of $\Sw_h$. This depends on how close the eigenvalues of $A$
are to $\pm i\omega$ when measured with the following parabolic
distance function
\begin{equation} \label{eq: distance def}
d(z_1, z_2) := |\Re\{z_1 - z_2\}| + \alpha |\Im\{z_1 - z_2\}|^2, 
\qquad \alpha = \frac{2\pi^2-3}{12\pi}\approx 0.44.
\end{equation}
Indeed, in the proof of Lemma \ref{lemma: beta complex} we show that
$$
   |\mu_j|^2 = \left|\hat\beta\left(\lambda_j/\omega\right)\right|^2 \approx 
   1-2\pi \cdot d\left(\lambda_j/\omega, i\right).
$$
The significance of the parabolic distance is also visible in
in the middle subplot of Figure~\ref{fig:BETA}
which displays level contours of $|\hat{\beta}|$. It can be seen that the decrease in $|\hat{\beta}(z)|$ depends on the direction in which $z$ move away from $\pm i$.
Specifically, the figure shows that near $z = \pm i$, $|\hat\beta|$ is more sensitive to changes in the direction of the real axis than in the direction of the imaginary axis.
The shape of the level contours thus motivates the introduction of a parabolic distance function. The constant $\alpha$ in \eqref{eq: distance def}
is chosen so that the level contours of $|\hat{\beta}|$ align with those of $d(z,\pm i)$ near $z = \pm i$, this will be made clear in the proof of Lemma \ref{lemma: beta complex}.

We are now ready to present 
the main result establishing convergence in the following theorem.

\begin{theorem}\label{maintheorem}
Assume that \ref{assumption:stability} and \ref{assumption:E+U} hold
and additionally that
$A \in \C^{m\times m}$ can be diagonalized as $A = R \Lambda R^{-1}$.
Let $\hat{w}^\star_h \in \C^m$ be the solution of \eqref{eq: DISCRETE HELMHOLTZ} and let $\hat{w}_h^{(n)}\in \C^m$ be the $n^{th}$ iterate generated by the WaveHoltz iteration \eqref{eq:semi-discrete-iteration} with initial condition $\hat{w}_h^{(0)}$.
Define
$$
  \varepsilon = \min_j \min\left\{
  d(\lambda_j/\omega,i),
  d(\lambda_j/\omega,-i)
  \right\} > 0.
$$
Then, $\hat{w}^{(n)}_h\to \hat{w}_h^\star$ with a convergence rate $r\in(0,1)$ satisfying
\begin{equation}\label{eq:rassumption}
  r \leq \max\left\{1 - \varepsilon, 1 - \delta\right\},
\end{equation}
and $\delta \in (0,1)$ a universal constant independent of $A$, $F$, and $\omega$. 
Furthermore, let $||\cdot||$ be any $p$-norm.
Then at each iteration $n$,
\begin{equation}\label{eq:oneiterconv} 
\|R^{-1}(\hat{w}^{(n+1)}_h - \hat{w}_h^\star)\| \leq 
r\|R^{-1}(\hat{w}^{(n)}_h - \hat{w}_h^\star)\|,
\end{equation}
and, if $\kappa(R) = \|R\|\cdot\|R^{-1}\|$ is the condition number of $R$, 
\begin{equation} \label{eq:manyiter}
    \|\hat{w}_h^{(n)} - \hat{w}_h^\star\| \leq \kappa(R) r^n \|\hat{w}_h^{(0)} - \hat{w}_h^\star\|.
\end{equation}
\end{theorem}

We now make some observations about Theorem \ref{maintheorem} and its consequences.
We start with some comments on the convergence.
\begin{itemize}
\item Under the assumptions \ref{assumption:stability} and \ref{assumption:E+U} the theorem guarantees that
the WaveHoltz iteration converges as \mbox{$n\to \infty$}.
Note that a discretization of the Helmholtz equation is non-singular
if $\lambda_j\neq \pm i\omega$ for all $j$,
i.e under assumption \ref{assumption:E+U}.
 Hence, as the number
of eigenvalues is finite, $\varepsilon > 0$ for
any non-singular discretization of the Helmholtz equation.
\item 
The parameter $\varepsilon$ measures the relative gap
between the eigenvalues of the system and $i\omega$
in terms of the parabolic distance $d$.
The relative gap is a consequence of the discretization and the physical problem setup rather than something introduced by the WaveHoltz iteration.

\item
The precise estimate  (\ref{eq:oneiterconv}) is for the  coefficients $R^{-1} \hat{w}_h$ in the expansion in the eigenvectors of $A$. Since we do not assume that $R$ is unitary, $\|R^{-1}(\hat{w}^{(n+1)}_h - \hat{w}^\star_h)\|$ may greatly differ from the actual error $\|\hat{w}^{(n+1)}_h - \hat{w}^\star_h\|$ in the transient stages of the iteration. This could occur 
for large condition numbers of $R$, for instance 
when the eigenvectors of $A$ are nearly linearly dependent. 

\end{itemize}
Next, we discuss what the theorem implies for the convergence rate measured by the number of 
iterations $N_{\rm iter}$ needed to obtain a prescribed relative error $\tau$.
We have the following corollary about how $N_{\rm iter}$
is related to the eigenvalue gap $\varepsilon$.
See the numerical examples.

\begin{corollary}\label{cor:iter}
Suppose $\hat{w}^{(0)}_h=0$.
For the relative error in the eigen-expansion coefficients
\mbox{$R^{-1}(\hat{w}^{(n)}_h - \hat{w}_h^\star)$}, we have
$$
\frac{\|R^{-1}(\hat{w}^{(n)}_h - \hat{w}_h^\star)\|}{
\|R^{-1}\hat{w}_h^\star\|} \leq \tau,\qquad n\geq N_{\rm iter},
$$
where $N_{\rm iter}\leq |\log\tau|/\varepsilon$.
For the relative error in the solution, we have
$$
\frac{\|\hat{w}^{(n)}_h - \hat{w}_h^\star\|}{
\|\hat{w}_h^\star\|} \leq \tau,\qquad n\geq N_{\rm iter},
$$
where $N_{\rm iter}\leq (|\log\tau| + \log\kappa(R))/\varepsilon$.
\end{corollary}
\begin{proof}
This follows directly from \eqref{eq:oneiterconv}
and \eqref{eq:manyiter}, together with the fact that
$r^n\leq (1-\varepsilon)^n\leq e^{-\varepsilon n}$.
\end{proof}

It is of particular interest to understand how $N_{\rm iter}$
depends on $\omega$ in practice,
for different types of problems and discretizations.
We note first that by Corollary~\ref{cor:iter} it is at worst proportional to $1/\varepsilon$.
The theorem thus reduces the question to the study of
$\varepsilon$, which
depends on
how close the scaled eigenvalues $\lambda_j/\omega$ are to $\pm i$, measured
with the
distance function $d$ in \eqref{eq: distance def}.
The eigenvalues, in turn, depend in a non-trivial way on the
problem that is solved.
The relation between $N_{\rm iter}$ and $\omega$ is therefore
complicated. It is
strongly influenced by the properties of the problem that is solved: 
boundary conditions, geometry, wave speed $c$ and discretization parameter $h$. (Note that
the size of the matrix $A$ grows when $h\to 0$.)
We will give some general comments about it
here
but leave for future studies the precise details and conclusions.
We limit ourselves to two canonical types of problems set on
a bounded smooth domain $\Omega$.

\subsubsection*{Dirichlet/Neumann Boundary Conditions}
In this case the wave energy is preserved in the wave equation.
The first order continuous operators in \eqref{eq:firstorderops}
both 
have point spectrums.
Their eigenvalues lie on the imaginary axis
and their eigenfunctions form an orthonormal basis for $L^2(\Omega)$.
For discretizations that discretely preserve energy
the eigenvalues of the matrix $A$ are also purely imaginary
and converge to
those of the continuous operator.
This means that if waves at the dominant wave length are well resolved,
the convergence rate will depend
just on $\omega$ and the eigenvalues of the continuous
operator, not on the discretization parameter $h$.
Moreover, $A$ is typically normal in this
case, with unit condition number $\kappa(R)=1$.

Since $d(\lambda_j/\omega,\pm i)=\alpha|\lambda_j/\omega\pm i|^2$
in this case, the theorem gives
$$
  \varepsilon = \alpha\min_j
  \left|\frac{\lambda_j\pm i\omega}{\omega}\right|^2,
$$
i.e. $\varepsilon$ is quadratic in the the euclidean distance.
We have assumed that $\lambda_j\neq \pm i\omega$ for all $j$,
so $\varepsilon>0$, but we can in principle have eigenvalues arbitrarily close
to $\pm i\omega$, giving an arbitrary slow convergence.
If we order the eigenvalues along the imaginary axis 
we can define a set of uniformly non-resonant $\omega$
such that $\eta| \lambda_{j+1}-\lambda_j|\leq |i\omega-\lambda_{j}|\leq(1-\eta)| \lambda_{j+1}-\lambda_j|$
for some $j\in{\mathbb Z}$ and a fixed $\eta\in(0,1/2)$.
These frequencies thus lie in the interior of the
intervals $[\lambda_{j+1},\lambda_{j}]/i$.
Then there is a $j^*=j^*(\omega)\in{\mathbb Z}$ such that
$$
\varepsilon 
 \geq \alpha\eta^2
  \left|\frac{\lambda_{j^*+1}-\lambda_{j^*}}{\omega}\right|^2.
$$   
Referring to Weyl's asymptotic estimate
of eigenvalues distributions, $\lambda_j\sim j^{1/d}$ 
in $d$ dimensions this leads to
$$
\lambda_{j+1}-\lambda_j
\sim j^{1/d}-(j+1)^{1/d}\sim j^{1/d-1}
\sim \lambda_j^{1-d}
\quad \Rightarrow\quad
   \varepsilon \sim 
   \left|\frac{\lambda_{j^*}^{1-d}}{\omega}\right|^2\sim  \left|\frac{\omega^{1-d}}{\omega}\right|^2\sim  \omega^{-2d}.
$$ 
Thus, when restricted to uniformly non-resonant $\omega$ 
the number of iterations scale as $N_{\rm iter}\sim\omega^{2d}$ in this case.
(Note that $\kappa(R)=1$.)
This was discussed in \cite{WaveHoltz}.

\subsubsection*{Impedance Boundary Conditions}
These boundary conditions model open problems where waves can propagate
out of the domain. Indeed,
impedance conditions can be seen as a simple approximation
of an absorbing boundary condition. 
The convergence theory is in general more complicated than the Dirichlet/Neumann case.
The continuous operators \eqref{eq:firstorderops} may or may not have
eigenvalues. If they have, the discrete operator $A$
will have eigenvalues converging to them as $h\to 0$.
However, $A$ will also have many
spurious eigenvalues, which do not converge.
Additionally, $A$ is not normal and the condition number 
$\kappa(R)$ is not one, but grows with finer discretizations.
Nevertheless, since energy leaves the
domain the problem is dissipative and
all eigenvalues, continuous as well as discrete, will have a 
strictly negative real part.

The distribution of eigenvalues in the left half-plane for different
open problems is a classical subject in scattering theory. The proximity of
eigenvalues 
to the imaginary axis is related to how much
wave energy can be trapped by the geometry of $\Omega$ or the wave speed $c$. 
The existence
of ``resonance free" regions near the imaginary axis,
i.e. regions without eigenvalues,
is a particularly
important question, going 
back to the work of e.g.
Morawetz \cite{Mo66}, Lax–Phillips \cite{LP68} and Ralston \cite{Ra69}.
This phenomenon is also
closely connected to the $\omega$-dependence
of the stability
constant in frequency-explicit solution estimates of Helmholtz; see e.g. discussions in 
\cite{MoiolaSpence19,GrahamEtAl2019}. 

If there are no trapped waves, there is
a resonance free strip such that
 $\Re \lambda_j\leq -\delta<0$ for all eigenvalues of the continuous operator.
The theorem then gives that
\begin{align*}
\varepsilon &=\min_j
|\Re \lambda_j/\omega|
+
\alpha 
  \left|\frac{\Im\lambda_j\pm \omega}{\omega}\right|^2
\geq \frac{\delta}{\omega}.
\end{align*}
Therefore, $N_{\rm iter}$ would be proportional to just $\omega$.
In the discrete case, the numerical tests in one dimension, Section \ref{sec:Num1d}, indicate
that $\omega \varepsilon$ depends weakly on $\omega$ under the scaling \eqref{eq:homegascale}, and
almost entirely on the diameter of the domain, which is proportional to the time it takes a wave to flow from one end of the domain to the other and then propagate out. 
Indeed, we see $O(\omega)$ iterations to convergence also in the discrete case.
 
When there are trapped waves the eigenvalues of the
continuous operators can come exponentially close
to the imaginary axis. In these cases we expect that the convergence will become similar to
the Dirichlet/Neumann case.

Note that in the impedance case we must also account for
the effect of the condition number $\kappa(R)$, which may grow with
$\omega$ when the scaling 
\eqref{eq:homegascale} is used.
Our numerical experiments in Section \ref{sec:Num1d}
indicates that a polynomial bound on $\kappa$ in terms of $\omega$
is not a restrictive assumption. By Corollary \ref{cor:iter}
this suggests that a term $\sim \frac1\varepsilon\log\omega$,
which is just
logarithmic in $\omega$, would be added to
 $N_{\rm iter}$.

\subsubsection*{Extension}
In the more general case, when $A$ cannot be diagonalized, 
we can still show that the iteration converges and the error satisfies
\[ 
\|\hat{w}_h^{(n)} - \hat{w}_h^\star\| \leq K r^n \|\hat{w}_h^{(0)} - \hat{w}_h^\star\|, 
\]
for some $K > 0$.
This error bound is less precise than in Theorem \ref{maintheorem} as not much can be said about the constant $K$ in general. Nevertheless, the generalization to non-diagonalizable discretizations is important because upwind type discretizations are often not diagonalizable. Consider 
for example the most basic upwind finite difference discretization of $w_t + w_x = 0$, i.e. 
\[ 
\frac{dw_j}{dt} + \frac{w_j - w_{j-1}}{h} = 0. 
\]
For this discretization the matrix $-hA$ is bi-diagonal with ones on the main diagonal and negative ones on the first lower diagonal. The eigenvalues of $A$ are all $-h^{-1}$, but $A$ has only the single eigenvector $(0,\dots,0,1)^T$ and thus is not diagonalizable. Other upwind discretizations like the discontinuous Galerkin method with upwind flux are also not diagonalizable. In Section \ref{sec:num2d} we will see that DG with upwind flux produce very similar iterations with WaveHoltz as, for example, the diagonalizable finite difference scheme. This suggests that a different analysis not based on diagonalization and bounds on the condition number should be possible. Such analysis is left for the future.

\subsection{Proof of Theorem~\ref{maintheorem}}
In this section, we prove Theorem \ref{maintheorem} relying on Lemma \ref{lemma: beta complex} provided in Section \ref{sec:lemmas}.
To determine how the iterates $\hat{w}_h^{(n)}$ converge to $\hat{w}_h^\star$, we analyze the error vector at the $n$-th iteration $\hat{e}_h^{(n)} := \hat{w}_h^{(n)} - \hat{w}_h^\star$.
Substituting $\hat{w}_h^{(n)} = \hat{w}_h^\star + \hat{e}_h^{(n)}$ into \eqref{eq:semi-pi} and using \eqref{eq:pi-affine} we find
\begin{equation} \label{eq: error FPI}
    \hat{e}^{(n+1)}_h = \mathcal{S}_h \hat{e}_h^{(n)} \implies \hat{e}_h^{(n)} = (\Sw_h)^n \hat{e}_h^{(0)}.
\end{equation}
We will therefore have convergence $\hat{e}_h^{(n)}\to 0$ if 
$\rho(\Sw_h) < 1$, which is what we now prove.

In the definition of $\Sw_h\hat{w}$, we have
\[ 
\Sw_h \hat{w} = \frac{2}{T}\int_0^T \left(\cos(\omega t) - \frac{1}{4}\right) w(t) \, dt,
\qquad \frac{dw}{dt}=Aw, \quad w(0)=\hat{w}.
 \]
We can solve the ODE explicitly as $w(t) = e^{tA}\hat{w}$, giving
 \[ 
 \Sw_h \hat{w} = \left[\frac{2}{T}\int_0^T \left(\cos(\omega t) - \frac{1}{4}\right) e^{tA} \, dt\right] \hat{w} 
 \quad\Rightarrow\quad
\Sw_h = \frac{2}{T}\int_0^T \left(\cos(\omega t) - \frac{1}{4}\right) e^{tA} \, dt. 
\]
Since we have assumed that $A=R\Lambda R^{-1}$, we can simplify
further and also diagonalize $\Sw_h$ as
\begin{equation} \label{eq: diagonalization of S}
\Sw_h = R \left(\frac{2}{T}\int_0^T \left(\cos(\omega t) - \frac{1}{4}\right) e^{t\Lambda} \, dt\right) R^{-1}
=: RSR^{-1},
\end{equation}
where $S$ is a diagonal matrix with 
the eigenvalues $\{\mu_j\}$ of $\Sw_h$ as
entries. 
Substituting \eqref{eq: diagonalization of S} into \eqref{eq: error FPI} we have,
\[ 
\hat{{e}}_h^{(n+1)} = R S R^{-1}\hat{{e}}_h^{(n)}, \qquad\text{and} \qquad
\hat{{e}}_h^{(n)} = R S^n R^{-1} \hat{{e}}_h^{(0)}. 
\]
To both equations, we multiply both sides by $R^{-1}$, take the norm, and use the sub-multiplicative property of the matrix norm to conclude that
\[
\|R^{-1} \hat{e}_h^{(n+1)}\| \leq r\, \|R^{-1} \hat{e}_h^{(0)}\|, \qquad\text{and} \qquad 
\|\hat{e}_h^{(n)}\| \leq r^n\|R\| \cdot \| R^{-1} \| \cdot \|\hat{e}_h^{(0)}\|,
\]
where we have defined $r =\|S\|$.
If we can show 
(\ref{eq:rassumption}), we have thus proved that
the iteration converges, as well as (\ref{eq:oneiterconv})
and (\ref{eq:manyiter}).
Since $S$ is diagonal and $\|\cdot\|$ is a $p$-norm,
$$
r = ||S||=\rho(S)= \max_j|S_{jj}|=\max_j|\mu_{j}|.
$$
We note that for $j=1, \ldots, m$,
\[ \mu_j=
\frac{2}{T}\int_0^T \left(\cos(\omega t) - \frac{1}{4}\right) e^{t\lambda_j} \, dt
=\frac{1}{\pi}\int_0^{2\pi} \left(\cos(s) - \frac{1}{4}\right) e^{s\lambda_j/\omega} \, ds= \hat\beta\left(\frac{\lambda_j}{\omega}\right). \]
Therefore, by Lemma~\ref{lemma: beta complex} and 
by the definition of $\varepsilon$ above,
\begin{align*}
\max_j |\mu_j| &= \max_j |\hat\beta(\lambda_j/\omega)| \\
&\leq
\max_j \max\left\{1-d\left(\frac{\lambda_j}{\omega},i\right),
1-d\left(\frac{\lambda_j}{\omega},-i\right),
1-\delta\right\}
\leq
\max\left\{ 1-\varepsilon, 1-\delta \right\}.
\end{align*}
This proves (\ref{eq:rassumption}) and concludes the proof.

\subsection{Supporting Lemmas} \label{sec:lemmas}
The proof of Theorem \ref{maintheorem} relies on 
estimates of the scaled filter transfer function 
$\hat{\beta}(z)$ defined in \eqref{eq: beta def}.
The first estimate was proved in a previous paper and
considers only purely imaginary $z$.

\begin{lemma} \label{lemma: prev WH}
For all $y \in \R$, the scaled filter transfer function $\hat\beta$ satisfies
\begin{align*}
    |\hat\beta(iy)| &\leq 1 - (y-1)^2, &\text{when } |y-1| \leq \frac{1}{2}, \\
    |\hat\beta(iy)| &\leq 1 - (y+1)^2, &\text{when } |y+1| \leq \frac{1}{2}, \\
    |\hat\beta(iy)| &\leq \frac{3}{4}, &\text{else}.
\end{align*}
\end{lemma}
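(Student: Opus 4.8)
\emph{Proof strategy.} The plan is to reduce the claim to one--variable inequalities for $\phi(y):=|\hat\beta(iy)|$ via the closed form in Lemma~\ref{lemma: beta analytic}. Setting $z=iy$ in \eqref{eq: beta hat} and using $|e^{2\pi iy}-1|=2|\sin\pi y|$ gives, for $y\notin\{0,\pm1\}$,
\[
  \phi(y)=\frac{(3y^2+1)\,|\sin\pi y|}{2\pi\,|y|\,|1-y^2|},
\]
while $\phi(0)=\tfrac12$ and $\phi(\pm1)=1$ by continuity, so the asserted bounds hold trivially there. Inserting the Weierstrass product $\sin\pi y=\pi y\prod_{n\ge1}(1-y^2/n^2)$ and cancelling the $n=1$ factor against $|1-y^2|$ yields the identity, valid for all real $y$,
\[
  \phi(y)=\frac{3y^2+1}{2}\prod_{n=2}^{\infty}\Bigl|1-\frac{y^2}{n^2}\Bigr|.
\]
Since the integrand defining $\hat\beta$ is real, $\hat\beta(\bar z)=\overline{\hat\beta(z)}$, so $\phi$ is even; I may therefore take $y\ge0$, and the bound near $y=-1$ then follows from the one near $y=+1$.

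For the ``else'' range with $y\ge 0$, i.e.\ $y\in[0,\tfrac12)\cup(\tfrac32,\infty)$, I would argue as follows. For $y>\tfrac32$, bound $|\sin\pi y|\le1$ and check that $g(y):=\frac{3y^2+1}{2\pi y(y^2-1)}$ is decreasing on $[\tfrac32,\infty)$ (the numerator of $g'$ is $-3y^4-6y^2+1<0$ there), so $\phi(y)\le g(y)\le g(\tfrac32)=\tfrac{31}{15\pi}<\tfrac34$. For $y\in[0,\tfrac12]$, apply $\ln(1-x)\le-x$ to the tail product to get $\phi(y)\le\tfrac{3y^2+1}{2}\exp\!\bigl(-(\tfrac{\pi^2}{6}-1)y^2\bigr)$, and note this function is increasing on $[0,\tfrac12]$ with value $\tfrac78\exp\!\bigl(-\tfrac14(\tfrac{\pi^2}{6}-1)\bigr)<\tfrac34$ at $y=\tfrac12$; both are short calculus checks.

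For the parabolic range $y\in[\tfrac12,\tfrac32]$ I would localize at the resonance. Writing $y=1+\eta$ with $|\eta|\le\tfrac12$, using $\sin\pi y=-\sin\pi\eta$ together with the Weierstrass product for $\sin\pi\eta$, the closed form becomes
\[
  \phi(1+\eta)=\frac{(4+6\eta+3\eta^2)(1-\eta)}{2(2+\eta)}\prod_{n=2}^{\infty}\Bigl(1-\frac{\eta^2}{n^2}\Bigr),
\]
and since $1-\eta>0$ the target bound $\phi(1+\eta)\le 1-\eta^2=(1-\eta)(1+\eta)$ is equivalent to
\[
  \prod_{n=2}^{\infty}\Bigl(1-\frac{\eta^2}{n^2}\Bigr)\ \le\ 1-\frac{\eta^2}{4+6\eta+3\eta^2},\qquad |\eta|\le\tfrac12.
\]
When $\eta\ge0$ this is immediate: the left side is at most its $n=2$ factor $1-\eta^2/4$, and $4+6\eta+3\eta^2\ge4$ makes the right side at least $1-\eta^2/4$. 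When $\eta<0$ the right side is smaller (it equals $1-\tfrac47\eta^2$ at $\eta=-\tfrac12$), so one factor no longer suffices; there I would peel off the $n=2$ and $n=3$ factors exactly and dominate the remaining tail by $\exp\!\bigl(-\eta^2\sum_{n\ge4}n^{-2}\bigr)$, reducing to a single explicit inequality on $[-\tfrac12,0]$ settled by monotonicity of the two sides and their values at $\eta=0$ and $\eta=-\tfrac12$.

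The hard part will be exactly this last step on the side $\eta<0$: at $\eta=-\tfrac12$ the two sides are $\tfrac{8}{3\pi}\approx0.849$ and $\tfrac67\approx0.857$, so the infinite product must be controlled rather tightly (a couple of exact factors plus a sharp tail bound), whereas everywhere else generous elementary bounds are enough. An equivalent route that avoids the product is to substitute the Taylor bounds $\pi\eta-\tfrac{(\pi\eta)^3}{6}\le\sin\pi\eta\le\pi\eta-\tfrac{(\pi\eta)^3}{6}+\tfrac{(\pi\eta)^5}{120}$ (valid for $|\pi\eta|\le\tfrac\pi2$) directly into the first closed form, which leads to the same near--tight polynomial inequality.
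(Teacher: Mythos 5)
The paper offers no proof of this lemma at all --- it is imported verbatim as Lemma 2.1 of \cite{WaveHoltz} --- so your argument is necessarily an independent construction rather than a variant of the paper's. Most of it is correct and I could verify it line by line: the closed form $\phi(y)=\frac{(3y^2+1)|\sin\pi y|}{2\pi|y|\,|1-y^2|}$ follows from \eqref{eq: beta hat}, the Weierstrass-product identity $\phi(y)=\frac{3y^2+1}{2}\prod_{n\ge2}|1-y^2/n^2|$ is right, the reduction to $y\ge0$ by evenness is legitimate, $g(\tfrac32)=\tfrac{31}{15\pi}\approx0.658<\tfrac34$ with $g$ decreasing handles $y>\tfrac32$, the bound $\tfrac78 e^{-(\pi^2/6-1)/4}\approx0.745<\tfrac34$ (tight but valid) handles $[0,\tfrac12]$, and the one-factor argument for $\eta\in[0,\tfrac12]$ in the parabolic range is clean and complete.

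The genuine gap is the step you yourself flag, $\eta\in[-\tfrac12,0)$. You propose to settle the reduced inequality by ``monotonicity of the two sides and their values at $\eta=0$ and $\eta=-\tfrac12$.'' That cannot work as stated: both sides equal $1$ at $\eta=0$ and both decrease toward $\eta=-\tfrac12$, and two decreasing functions agreeing at one endpoint and ordered at the other can still cross in between; endpoint data plus monotonicity proves nothing here. Since the inequality is genuinely tight ($\tfrac{8}{3\pi}\approx0.8488$ versus $\tfrac67\approx0.8571$ at the endpoint), a real pointwise comparison is needed. One that closes it: compare $-\log$ of the two sides. Termwise, $-\log\prod_{n\ge2}(1-\eta^2/n^2)\ge\sum_{n\ge2}\eta^2/n^2=(\pi^2/6-1)\eta^2$. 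On the right, $4+6\eta+3\eta^2\ge\tfrac74$ on $[-\tfrac12,0]$, so $x:=\eta^2/(4+6\eta+3\eta^2)\le\tfrac47\eta^2\le\tfrac17$, and $-\log(1-x)\le x+\frac{x^2}{2(1-x)}\le\tfrac47\eta^2+\tfrac{4}{21}\eta^4$. The required inequality then reduces to $\tfrac{4}{21}\eta^2\le\pi^2/6-1-\tfrac47\approx0.0735$, which holds since $\eta^2\le\tfrac14$. With that replacement (or any equally explicit estimate) your proof is complete; without it, the $|y+1|\le\tfrac12$ and left half of the $|y-1|\le\tfrac12$ cases are unproven.
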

This is a slightly sharper version of Lemma 2.2 in \cite{garcia2022extensions} and can be proved in the same way as presented there.

In the present paper we need to generalize Lemma~\ref{lemma: prev WH}
to complex $z\in\Cminus$.
Since the decay of $|\hat\beta(z)|$ away from $z=\pm i$ is
not the same in all directions, we introduce the parabolic
distance (\ref{eq: distance def}). We then
get a concise description of the decay that mirrors the
result in Lemma~\ref{lemma: prev WH}.
\begin{lemma} \label{lemma: beta complex}
    The scaled filter transfer function $\hat\beta$
    is entire and
    maps the left
    half plane $\Cminus$ to
    the unit disc ${\mathbb D}$.
    There are universal constants $d_0>0$ and $\delta\in(0,1)$ such that 
for all $z \in \Cminus$,
\begin{align}\label{eq: beta estimates}
    |\hat\beta(z)| &\leq 1 - d(z,i), &\text{when } d(z,i) \leq d_0, \nonumber\\
    |\hat\beta(z)| &\leq 1 - d(z,-i), &\text{when } d(z,-i) \leq d_0, \\
    |\hat\beta(z)| &\leq 1-\delta, &\text{else}.\nonumber
\end{align}
Moreover, $\hat\beta$
can be expressed as
\begin{equation} \label{eq: beta hat}
    \hat\beta(z) = \begin{cases}
-\frac{1}{2}, & z = 0, \\
1, & z = \pm i, \\
\frac{3z^2 - 1}{4\pi z (z^2 + 1)} \left( e^{2\pi z} - 1 \right), & \mathrm{else}.
\end{cases}
\end{equation}
\end{lemma}
\begin{proof}
The expression
\eqref{eq: beta hat}
is given by a direct computation.
Since
$\hat{\beta}$ is
the Fourier transform
of an $L^2$ function compactly supported in ${\mathbb R}$,
it is entire by
the Paley-Wiener Theorem.

To show that
$\hat\beta$
maps $\Cminus$
to ${\mathbb D}$
we need to verify that
$|\hat\beta(z)|\leq 1$ whenever
$z\in \Cminus$.
In \eqref{eq: beta hat}, observe that $\hat{\beta}$ is a rational function multiplied by $e^{2\pi z} - 1$. The denominator of the rational function is of higher order than the numerator, so this function tends to zero uniformly at infinity. Thus, since $e^{2\pi z} - 1$ is bounded for all $z\in \Cminus$, $\hat{\beta}(z)\to 0$ uniformly as $|z|\to \infty$ for all $z\in \Cminus$. Therefore, there exists a $R_0\geq 2$ such that
    $|\hat{\beta}(z)| \leq 3/4$
    for
$|z| \geq R_0$
and $z\in\Cminus$.
Moreover, by Lemma~\ref{lemma: prev WH}
we have
$|\hat{\beta}(z)| \leq 1$
for all $z$
on the imaginary axis.
Hence,
outside and on the boundaries
of the domain
$$
B_0 := \{z\in \Cminus: |z|\leq R_0\},
$$
we have
$|\hat{\beta}(z)| \leq 1$
and by the maximum modulus principle
this holds also inside the domain,
because $\hat\beta$
is analytic.
We have thus shown that
$\hat\beta:\Cminus\mapsto{\mathbb D}$.

For the estimates in \eqref{eq: beta estimates}, we consider the domains
\[
G_{\varepsilon}=\{z\in \Cminus\ :\ d(z,i)\leq \varepsilon\},\qquad \varepsilon\leq \alpha,
\]
which we parameterize as
\[
z(s,t) =-s^2(1-t^2)+i\left(1+ \frac{1}{\sqrt\alpha} t s \right).
\]
Note that
$z(s,t)$ maps $[0,\sqrt{\varepsilon}]\times [-1,1]$ 
to $G_{\varepsilon}$ and that by the choice of the parameterization
\begin{equation} \label{eq: dvs}
  d(z(t,s),i)=s^2.
\end{equation}
Since $\hat\beta$ is analytic and $z$ a polynomial, we can
Taylor expand
$|\hat\beta(z(s,t))|^2$
in $s$ around $s=0$,
\[
 |\hat\beta(z(s,t))|^2 = b_0(t) + b_1(t)s +b_2(t) s^2 + R(s,t)s^3,
\]
where $R$ is a smooth function. To compute the $b$-coefficients,
we first compute derivatives of $\hat\beta$. 
From its definition,
\[
 \hat\beta^{(k)}(i)
=\frac{1}{\pi} \int_0^{2\pi} \left(\cos(s) - \frac{1}{4}\right) s^ke^{i s} \, ds,
\]
which are standard integrals. 
In particular $\hat\beta(i)=1$, $\hat\beta'(i)=\pi$
and $\hat\beta''(i)=(8\pi^2-3)/6$. Then since $z(0,t)=i$,
\begin{align*}
b_0(t) & = |\hat\beta(i)|^2 =1,\\
b_1(t) & = 2\Re \overline{\hat\beta(i)}\hat\beta'(i)z_s(0,t)
=2\Re \pi \frac{1}{\sqrt\alpha_1}it = 0, \\
b_2(t) & = \Re \left(
\overline{\hat\beta'(i)z_s(0,t)}\hat\beta'(i)z_s(0,t)+
\overline{\hat\beta(i)}\hat\beta''(i)z_s(0,t)^2+
\overline{\hat\beta'(i)}\hat\beta(i)z_{ss}(0,t)\right)\\
&=
\frac{\pi^2 }{\alpha}t^2
-\frac{8\pi^2-3}{6\alpha}t^2
-2\pi(1-t^2)
=
-2\pi + t^2\left(2\pi-
\frac{2\pi^2-3}{6\alpha}
\right).
\end{align*}
Hence, upon choosing $\alpha$
as in (\ref{eq: distance def}) we get $b_2=-2\pi$, 
and $b_0$, $b_1$, $b_2$ are all
independent of $t$. Moreover,
in view of (\ref{eq: dvs}),
\[
 |\hat\beta(z(s,t))|^2 =  1-2\pi d(z(t,s),i)+ R(s,t) d(z(t,s),i)^{3/2}.
\]
We now define
\[
  M = \max_{\stackrel{0\leq s\leq \sqrt{\alpha}}{|t|\leq 1}} |R(s,t)|, \qquad
  d_0 = \min\{ ((2\pi-2)/M)^{2} ,\, \alpha\},
\]
so that whenever $z\in G_{d_0}\subset G_{\alpha}$, due to \eqref{eq: dvs}, 
\[
  |\hat\beta(z)|^2 \leq 1-2\pi d(z,i)+d(z,i)^{3/2}M  \leq 
  1-2\pi d(z,i)+\sqrt{d_0} Md(z,i)  \leq 1-2d(z,i).
\]
After taking square roots on both side, this shows the first inequality in (\ref{eq: beta estimates}). The second follows
from the symmetry of $\hat\beta$ with respect to the real axis, $\hat\beta(\overline{z})
=\overline{\hat\beta(z)}$.

We have left to show the last inequality in (\ref{eq: beta estimates}).
However, before doing that we need to establish the geometry
of $G_{d_0}$ more carefully.
Suppose $z\in G_{d_0}$. Then
$$
 \alpha\geq d_0\geq d(z,i)\geq \alpha(\Im z-1)^2
 \quad\Rightarrow\quad 0\leq \Im z\leq 2,
$$
and as $|\Re z|\leq d(z,i)\leq d_0\leq \alpha \leq 1/\alpha$,
$$
 |z|^2 = |\Re z|^2 +|\Im z-1|^2 +2\Im z -1\leq
 \frac{1}{\alpha}|\Re z| +|\Im z-1|^2+3
 =\frac{1}{\alpha}d(z,i)+3
 \leq \frac{d_0}{\alpha}+3
 \leq 4.
$$
Therefore, as $R_0\geq 2$,
it holds that
$$
G_{d_0} \subset B_0 \cap
\{z\in \Cminus: \Im z\geq 0\}.
$$
We also define the corresponding set for the lower half plane,
namely $G'_{d_0} = \{z\in \Cminus\ |\ d(z,-i)\leq d_0\}$.
By the same arguments as above, 
$$
G'_{d_0} \subset B_0 \cap
\{z\in \Cminus: \Im z\leq 0\}.
$$
The sets are illustrated in Figure \ref{fig:D}.
    \begin{figure}[htbp]
        \centering
        \includegraphics[width=2in]{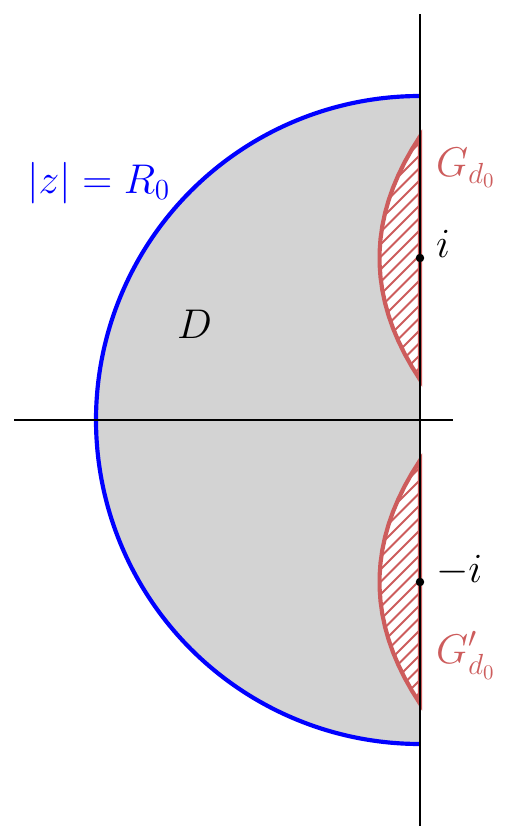}
        \caption{The regions $D$, $G_{d_0}$, and $G'_{d_0}$,
for some $d_0 < \alpha$.}
        \label{fig:D}
    \end{figure}
We need to prove that
$|\hat\beta(z)|$ is uniformly strictly smaller than one
outside $G_{d_0}\cup G'_{d_0}$.
We have already shown that outside $B_0$ we
have $|\hat\beta(z)|\leq 3/4$.
Define the remaining compact region $D$ in $\Cminus$ 
as
    \[ D = B_0 \setminus (G_{d_0} \cup G_{d_0}').  \]
We again use the maximum modulus principle and the analyticity of $\hat\beta$,
to estimate
$
  \max_{z\in D}|\hat\beta(z)|
  = 
  \max_{z\in \partial D}|\hat\beta(z)|.
$
Checking the maximum on each part of $\partial D$ we get 
first that $|\hat\beta(z)| \leq \frac{3}4$ on the outer arc $|z|=R_0$.
On the outer boundaries of $G_{d_0}$
and $G'_{d_0}$
we have
$|\hat\beta(z)| \leq 1-d_0$.
Finally, on the remaining part of the 
imaginary axis, where
$d(iy,\pm i)\geq d_0$,
Lemma \ref{lemma: prev WH} tells us that
\begin{align*}
|\hat\beta(iy)|  
&\leq
\max_{\stackrel{\alpha|y-1|^2\geq d_0\text{ and }\alpha|y+1|^2\geq d_0}{|y|\leq R_0}}
\max\{ 1-(y-1)^2 ,\, 1-(y+1)^2 ,\, 3/4 \}\\
&\leq
\max\{ 1-d_0/\alpha ,\, 3/4 \}
\leq 
\max\{ 1-d_0 ,\, 3/4 \}.
\end{align*}
Thus, in summary we can take $\delta = 1-\max\{1-d_0 , 3/4\}=\min\{d_0 , 1/4\}$ which concludes the proof.
\end{proof}

\section{Numerical Experiments} \label{sec:numeric}
In this section, we will empirically investigate the convergence of the WaveHoltz iteration for solving the  Helmholtz equation
\begin{subequations} \label{eq: basic Helmholtz}
    \begin{align}
        \Delta \hat{u} + \omega^2 \hat{u} = f(x), && x&\in\Omega, \\
        \mathbf{n}\cdot\nabla u = 0, && x&\in \Gamma_N, \\
        i\omega u + \mathbf{n}\cdot\nabla u = 0, && x&\in \Gamma_I.
    \end{align}
\end{subequations}
Here $\Omega$ is a bounded open subset of $\R^d$ for $d = 1$ or $d = 2$. The boundary of $\Omega$ is $\Gamma = \Gamma_N \cup \Gamma_I$ where $\Gamma_N$ is a subset of the boundary with Neumann boundary conditions and $\Gamma_I$ is a subset of the boundary with impedance boundary conditions which approximate absorbing boundary conditions on the bounded domain.
Since $\Gamma_I$ approximates an open boundary, this boundary will correspond to an outflow condition for the wave equation in the WaveHoltz iteration.
Some of the energy of the wave inside $\Omega$ will leak out of the domain over time at this boundary, so any convergent spatial discretization of the wave equation, which we write as the (real valued) system of differential equations
\begin{equation} \label{eq:wave_semiD}
    \frac{d}{dt}w_h = A w_h - F \cos(\omega t), \qquad \text{or} \qquad \frac{d}{dt}w_h = A w_h - F \sin(\omega t),
\end{equation}
must be dissipative, that is, all the eigenvalues of $A$ must lie in $\Cminus$.

We will consider two discretizations for the wave equation: the finite diference method (FD) and the discontinuous Galerkin method (DG). In the one dimensional experiments we will relate the eigenvalues and eigenvectors of the discretization matrix $A$ to the convergence rate of WaveHoltz.
We do not consider the case where the eigenvalues are purely imaginary (corresponding to energy conserving problems) as it has already been studied in \cite{WaveHoltz}.

In addition to the fixed point iteration, in this section we also consider WaveHoltz accelerated by GMRES. Since the fixed point of the WaveHoltz iteration satisfies,
\[ \hat{w}_h^\star = \Pi_h \hat{w}_h^\star = \Sw_h \hat{w}_h^\star + \pi_{0,h}, \]
we can apply GMRES to the linear system
\[ (I - \Sw_h) \hat{w}_h^\star = \pi_{0,h}, \]
which can be interpreted as a preconditioned system for the Helmholtz equation as shown in Section \ref{sec:aside}. We will see that GMRES substantially reduces the number of iterations to convergence. In the one dimensional examples we specify a relative tolerance for both the fixed-point iteration and the GMRES-accelerated iteration of $10^{-8}$.

\subsection{Numerical Methods}
We now briefly describe the numerical discretizations we use in the numerical experiments. 
\subsubsection{Finite Difference Method} \label{sec:FDM}
For the FD method we discretize the wave equation in first-order form:
\[
u_t = v, \qquad
v_t = \Delta u - f(x) \cos(\omega t).
\]
We apply the WaveHoltz iteration directly to this real-valued system, and recover the complex-valued estimate of the Helmholtz solution at iteration $n$ with $\hat{u}_h \approx \hat{u}^{(n)}_h + \frac{1}{i\omega}\hat{v}^{(n)}_h$ (see \ref{apx:real-valued-wh}).
The boundary conditions for the wave equation that will correspond to Neumann and impedance are, respectively,
\begin{align*}
    \mathbf{n}\cdot\nabla u &= 0, && x\in\Gamma_N, \; t > 0, \\
    u_t + \mathbf{n}\cdot\nabla u = v + \mathbf{n}\cdot\nabla u &= 0,  && x\in\Gamma_I,\; t > 0,
\end{align*}
where $\mathbf{n}$ is the outward facing normal to the boundary.

In one dimension we discretize this problem in the computational domain $\Omega = (-1, 1)$.
We approximate the spatial derivatives using central differences on the uniform grid $x_j = -1 + j h, \, j=0,\dots,m$ (where $h = 2/m$) and introduce the grid functions $(u_j,\, v_j) \approx (u(x_j), \, v(x_j))$.
Normal derivatives at the boundaries are approximated using ghost points and centered finite differences. For example at  $x_0$ we would have $(u_1 - u_{-1})/ 2 h = 0$. Outflow conditions are approximated similarly, using the ghost nodes. For example at $x_{m}$ we would have $v_m + (u_{m+1} - u_{m-1})/ 2h  = 0$. The spatial semi-discretization of the first order system is
\begin{align*}
    \frac{d}{dt} u_j &= v_j, \\
    \frac{d}{dt} v_j &= \frac{u_{j-1} - 2u_j + u_{j+1}}{h^2} - f(x_j) \cos(\omega t), \qquad j = 0, \dots, m.
\end{align*}
This spatial discretization has error proportional to $h^2 \omega^3$ for the Helmholtz problem, so, when we vary $\omega$, we select $h$ according to the rule $h^2\omega^3 = \mathrm{const.}$ Note that in \eqref{eq:wave_semiD} we write $w_h = \left(u_0, \ldots, u_m, v_0, \ldots, v_m \right)^T$ and $F = \left( 0, \ldots, 0, f(x_0), \ldots, f(x_n) \right)^T$.
In two dimensions the spatial discretization is similarly defined by applying the one dimensional derivative approximations in each direction of the two dimensional grid.

Our analysis on the convergence of $\hat{w}_h^{(n)}$ to the exact solution of the Helmholtz equation assumed that the system of differential equations in \eqref{eq:wave_semiD} is solved exactly in time, however we will solve this equation using a fourth order Runge-Kutta method of \cite{Runge-Kutta4} with small $\Delta t$ so that the temporal errors minimally affect the convergence rate. Throughout this section, $\|\cdot\|$ is the 2-norm.

\subsubsection{Discontinuous Galerkin Method} \label{sec:DGM}
For the DG method we discretize the wave equation in conservative form:
\begin{subequations} \label{eq:WAVE_EQuation_cons}
\begin{align}
    p_t + \nabla \cdot \mathbf{u} &= -\frac{1}{\omega}f(x)\sin(\omega t), \\
    \mathbf{u}_t + \nabla p &= 0.
\end{align}
\end{subequations}
We use the nodal DG-FEM on a mesh of elements $I_j, j=0,\dots,n$. 
We look for $(p_h, \mathbf{u}_h) \in C^1([0,T]; V)$ where $V$ is the usual DG polynomial approximation space such that for all $\varphi \in V$,
\begin{align*}
    ((p_h)_t, \varphi)_{L^2(I_j)} - a(\mathbf{u}_h, \varphi) &= -\frac{\sin(\omega t)}{\omega}(f, \varphi)_{L^2(I_j)}, \\
    ((\mathbf{u}_h)_t, \varphi)_{L^2(I_j)} - b(p_h, \varphi) &= 0.
\end{align*}
Here,
\begin{align*}
    a(\mathbf{v}, \varphi) &= \sum_{j=0}^n \int_{I_j} \mathbf{v} \cdot \nabla\varphi \, dx - \int_{\Gamma_j} (\mathbf{v} \cdot \mathbf{n})^\sharp \varphi \, ds \\
    b(q,\varphi) &= \sum_{j=0}^n \int_{I_j} q \nabla \varphi \, dx - \int_{\Gamma_j} q^\sharp \varphi\, \mathbf{n} ds.
\end{align*}
Here $(\mathbf{v} \cdot \mathbf{n})^\sharp$ and $q^\sharp$ are numerical fluxes for the quantities $\mathbf{v} \cdot \mathbf{n}$ and $q$, respectively, on the element interface $\Gamma_j = \partial I_j$. We take these fluxes to be the central/average flux:
\[ q^\sharp = \frac{1}{2} (q^- + q^+), \qquad (\mathbf{v} \cdot \mathbf{n})^\sharp = \frac{1}{2}(\mathbf{v}^- + \mathbf{v}^+)\cdot \mathbf{n}. \]
We choose the sub-optimal central flux (rather than the upwind flux) because the upwind flux produces a matrix which is not diagonalizable meaning we cannot examine the assumptions of Theorem \ref{maintheorem} in detail for the upwind flux discretization. However, in Section \ref{sec:num2d} we consider the upwind flux and find that it converges at roughly the same rate as the FD discretization.
The implementation is based on \cite{Kopriva-ImplementingSpectralMethods} and \cite{HesthavenWarburton-NodalDG}. We evolve the solution in time with the same fourth order Runge-Kutta method as for the FD method.

Here we can take $p$ and $\mathbf{u}$ to be real valued and recover the complex valued solution via the procedure in \ref{apx:real-valued-wh}. 

\subsection{Numerical Examples in One Dimension} \label{sec:Num1d}
Consider the problem:
\begin{subequations} \label{eq:1d-test-problem}
    \begin{gather}
        \frac{d^2\hat{u}}{dx^2} + \omega^2 \hat{u} = f, \qquad x\in (-1,1), \\
        \frac{d\hat{u}}{dx}(-1) = 0, \qquad i\omega \hat{u}(1) + \frac{d\hat{u}}{dx}(1) = 0.
    \end{gather}
\end{subequations}
In the following sections we will investigate the discretization matrices that arise from this problem via the finite difference and discontinuous Galerkin methods previously described. We will consider the eigenvalues and eigenvectors of these matrices for various values of $\omega$ and find that they satisfy the conditions of Theorem \ref{maintheorem}. We will then consider a test problem where we begin with a specially selected initial guess which has a slow rate of convergence at first, but eventually converges quickly.

\subsubsection{Results using the Finite Difference Method}
By Theorem \ref{maintheorem}, we expect WaveHoltz to converge with rate 
\[ r \leq \min\{ 1 - \varepsilon, 1-\delta\}, \qquad \varepsilon = \min_j d(\lambda_j/\omega, \pm i),\]
where $\lambda_j$ are the eigenvalues of the discretization matrix $A$. In this experiment we verify this convergence rate for $\omega$ between $10\pi$ and $30\pi$.

In order to be in the well resolved regime, we choose $h^2 \omega^3 = 10$. This will correspond to approximately 10 points per wavelength for $\omega = 10\pi$. First we verify \ref{assumption:E+U} by plotting the eigenvalues of $A$ scaled by $1/\omega$ for $\omega = 10\pi, 20\pi,$ and $30\pi$ in Figure \ref{fig:1d-eigs}. In Figure \ref{fig:1d-eigs} we see that indeed the parabolic distance to $i$ is not zero, and
as $\omega$ increases the scaled eigenvalues 
get closer to the imaginary axis. The black contours are level sets of the parabolic distance in Theorem \ref{maintheorem}.

\begin{figure}[]
    \centering
    \includegraphics[width=0.32\textwidth]{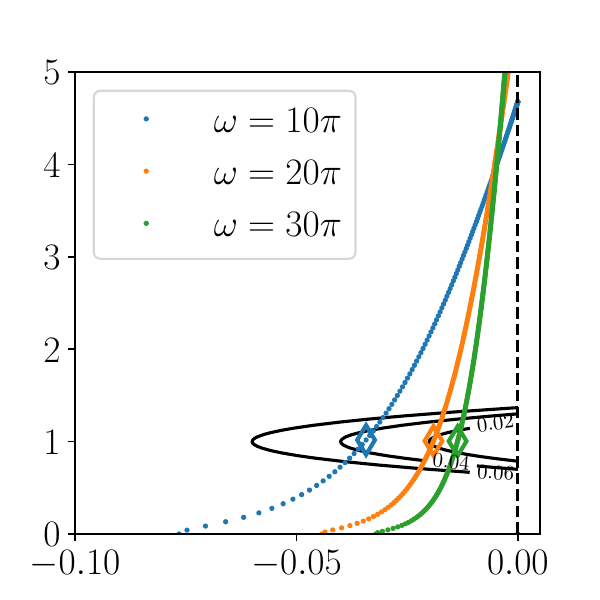}
    \includegraphics[width=0.32\textwidth]{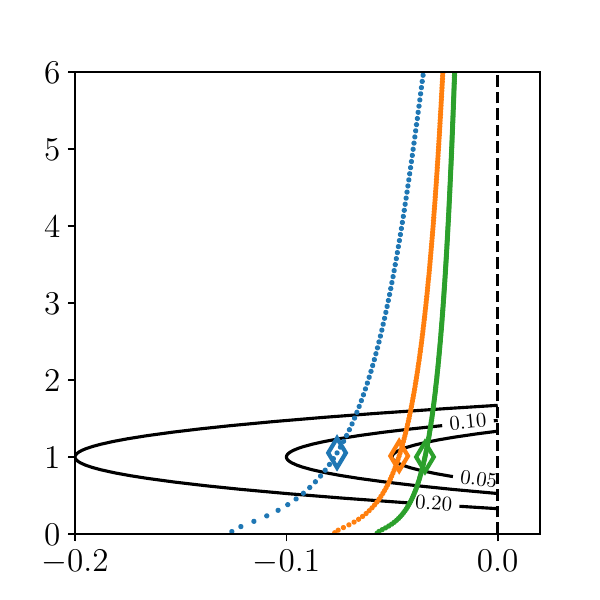}
    \includegraphics[width=0.32\textwidth]{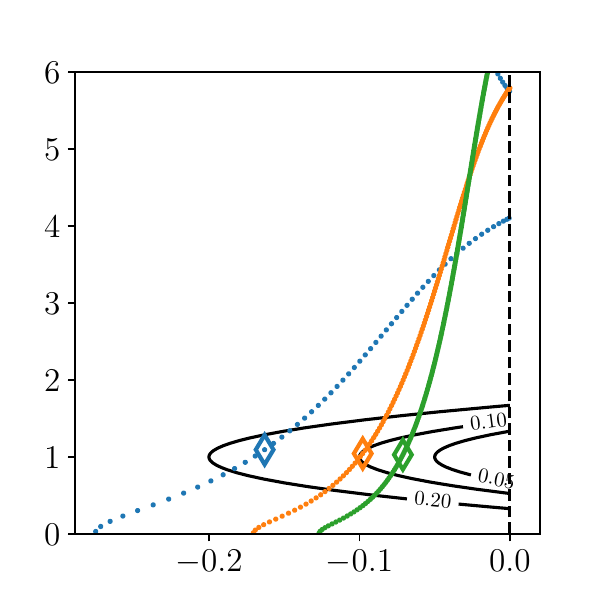}
    \caption{The eigenvalues of the discreization matrix $A$ scaled by $1/\omega$ for finte differences (left) and DG with $P=1$ (middle)  and $P=2$ (right). Because $A$ is real, all of its eigenvalues are either real or appear in complex conjugate pairs, so we plot the eigenvalues where $\Im\{\lambda\} \geq 0$. The diamonds highlight the eigenvalues minimizing the parabolic distance of which we draw three level sets. As $\omega$ increases we observe the distance between $i$ and the nearest scaled eigenvalue decreasing. Note that the imaginary part is much larger in magnitude than the real part.
    \label{fig:1d-eigs}}
\end{figure}

Turning now to the rate of convergence, in Figure \ref{fig:fd1d-eps-kappa} we plot $\varepsilon$, and $1 - \rho(\Sw_h)$ for various $\omega$. In the proof of Theorem \ref{maintheorem}, we showed that $\varepsilon \leq 1 - \rho(\Sw_h)$ which is verified in the figure.
For this discretization we see that $\varepsilon$ decreases sublinearly in terms of $\omega$ which suggests that WaveHoltz should converge quickly for this problem.

In the same plot we plot a typical estimator of the convergence rate
\[ \hat{\rho}(n) = \left( \frac{\hat{e}_h^{(n)}}{\hat{e}_h^{(0)}} \right)^{\frac{1}{n}}. \]
Note that $\hat{\rho}$ depends on the forcing and initial data and is bounded above by the theoretical convergence rate $\rho$.
The problem setup used here exemplifies a worst case scenario.
In Figure \ref{fig:fd1d-eps-kappa}, we report $\hat{\rho}$ after the last iteration (after which the error falls below a specified tolerance of $10^{-8}$) for the problem setup described below.
In the same figure, we also plot the condition number $\kappa(R)$ introduced in Theorem \ref{maintheorem}.
For this problem, it appears that the condition number scales roughly like $\kappa(R) \sim O(h^{-2}) = O(\omega^3)$.

\begin{figure}[]
    \centering
    \includegraphics[width=0.49\textwidth]{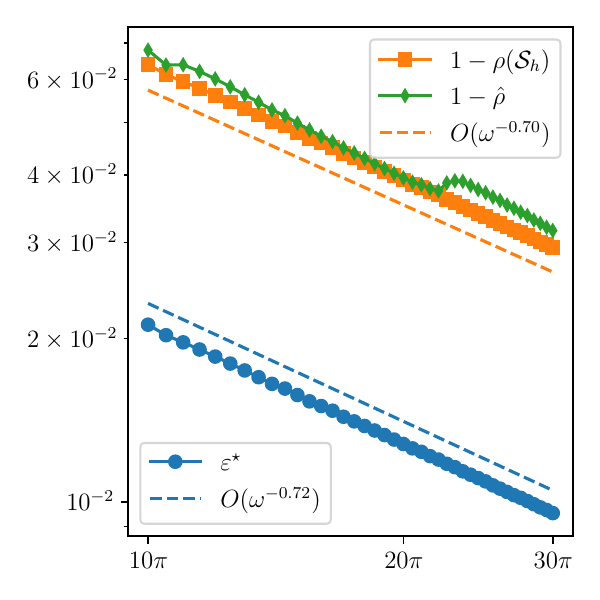}
    \includegraphics[width=0.49\textwidth]{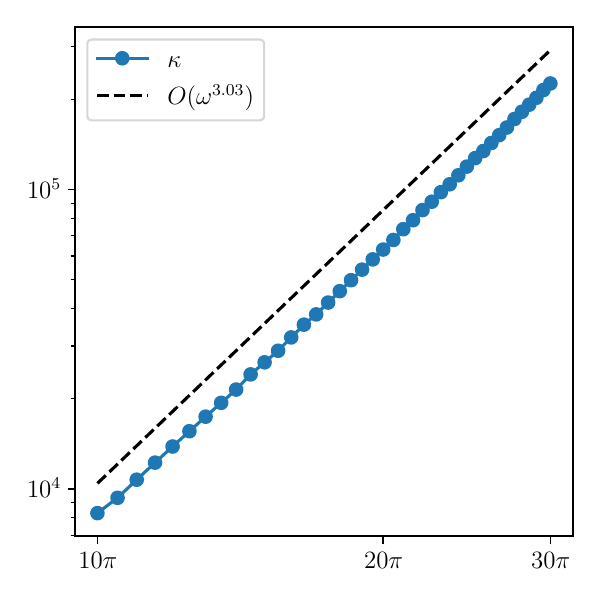}
    \caption{We display $\varepsilon$ vs. $\omega$ (left), and the condition number $\kappa(R)$ vs. $\omega$ (right). The slopes of the dashed lines for both plots were estimated by regression.
    \label{fig:fd1d-eps-kappa}}
\end{figure}

Next, we examine the number of iterations to convergence for a worst case problem. We will study this by examining the error $\hat{e}^{(n)}_h = \hat{w}^{(n)}_h - \hat{w}^\star_h$ which satisfies the fixed point iteration:
\[ \hat{e}_h^{(n+1)} = \Sw_h \hat{e}_h^{(n)} = \Sw_h^{n+1} \hat{e}_h^{(0)}. \]
We also monitor the norm of the error in the coefficients of the eigenvectors $c_h^{(n)}$ which are defined as
\[ c_h^{(n)} := R^{-1} \hat{e}_h^{(n)}. \]
We select an initial condition for the error as
\begin{gather}
    u_0(x) = 2\sin^2(\pi x)\sin(\omega x), \notag \\
    \hat{e}_h^{(0)} = (u_0, -\frac{d}{dx}u_0).  \label{eq: fd1d IC}
\end{gather}
The right-hand-side $f$ is implicitly defined in terms of this initial error. This choice of initial condition is selected because the fixed point iteration makes very slow progress towards the solution in the first few iterations, nevertheless, we find that eventually the estimates in Theorem \ref{maintheorem} match the observed behavior.

As suggested by Theorem \ref{maintheorem} we expect the WaveHoltz iteration to converge with rate no worse than $1-\varepsilon$. Since in Figure \ref{fig:fd1d-eps-kappa} we observed that $\varepsilon \sim O(\omega^{-0.72})$, hence for this problem we expect
\[ \frac{\|c_h^{(n+1)}\|}{\|c_h^{(n)}\|} \sim O(1 - \omega^{-0.72}), \]
and, since $\kappa \sim O(\omega^{3})$, we also expect
\[ \frac{\|\hat{e}_h^{(n)}\|}{\|\hat{e}_h^{(0)}\|} \sim O(\omega^{3}(1 - \omega^{-0.72})^n). \]

To the left in Figure \ref{fig:iters_v_rel_error} we plot the relative error as a function of the iteration count until it is below $10^{-8}$. We do this for $\omega = 10\pi, 20\pi,$ and $30\pi$. We see that the true error $\|\hat{e}_h\|$ may sometimes stagnate or even briefly increases, it may also decrease much faster than $\|c_h\|$, but on average they match closely.
In the same figure, we also plot the relative error of the GMRES-accelerated iteration which decrease much faster. To the right in Figure \ref{fig:iters_v_rel_error} we plot the number of iterations until the relative error falls below $10^{-8}$. We observe that the number of iterations to convergences scales better than $O(\omega)$ which is consistent with the discussion of impedance boundary conditions in Section \ref{sec:conv-theory}. For this problem, we observe that the GMRES-accelerated iteration requires roughly 3 times fewer iterations than the fixed-point iteration.

\begin{figure}[htbp]
    \centering
    \includegraphics[width=0.49\textwidth]{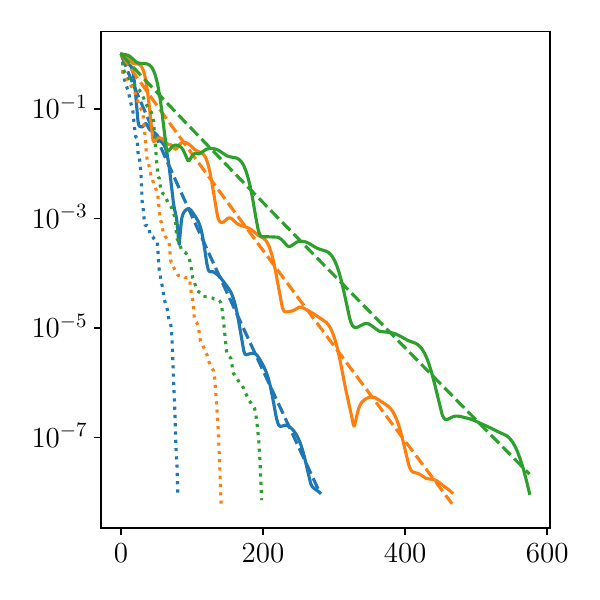}
    \includegraphics[width=0.49\textwidth]{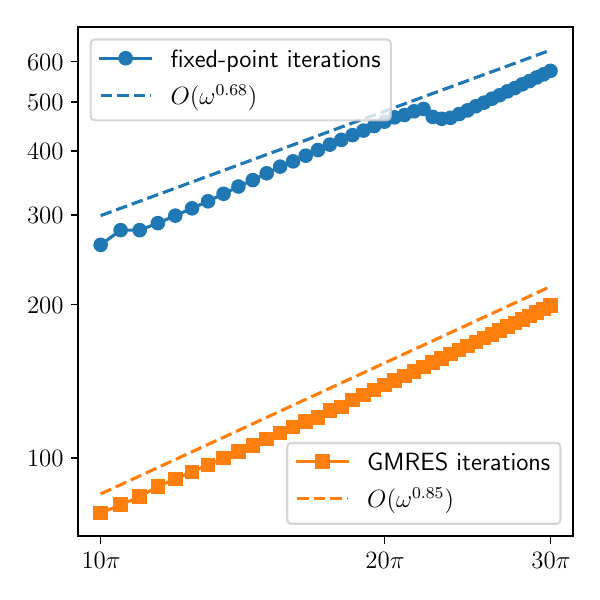}
    \caption{
    The left figure shows the relative error (solid), the eigenvector coefficient error (dashed), and the GMRES-accelerated error (dotted), with blue, orange, and green representing $\omega=10\pi, 20\pi, 30\pi$.
The right figure reports the number of iterations required for each method to achieve $\|\hat{e}_h^{(n)}\| \leq 10^{-8}\|\hat{e}_h^{(0)}\|$.
    }
    \label{fig:iters_v_rel_error}
\end{figure}

\subsubsection{Results using the Discontinuous Galerkin Method}
Consider the Helmholtz equation in \eqref{eq:1d-test-problem}. We discretize this problem with uniformly spaced elements and degrees of freedom specified on the Legendre-Gauss-Lobatto quadrature points. The order of the DG discretization is $P+\frac{1}{2}$ where $P$ is the degree of the polynomial basis functions on each element. Like in the finite difference discretization we choose our degrees of freedom according to $h^{P+\frac{1}{2}}\omega^{P+\frac{3}{2}} = 10.$

In Figure \ref{fig:1d-eigs} we plot the eigenvalues for discretizations of all combinations of $P=1,2$ and $\omega=10\pi, 20\pi, 30\pi$. As with the finite difference case, the eigenvalues are bounded away from $i\omega$, and since the real part of this distance is relatively large, we should expect fast convergence. Note that this distance is roughly two orders of magnitude larger than for the finite difference case. This difference is reflected in the convergence rates of these methods as DG converges much faster for this problem than FD. We speculate that this is a consequence of the accuracy of the impedance boundary conditions. In one dimension, the DG scheme's non-reflecting boundary conditions are exact to machine precision, whereas the finite difference boundary condition is $O(h^2)$, so it takes longer to dissipate the non-time-harmonic waves.

\begin{figure}[]
    \centering
    \includegraphics[width=0.49\textwidth]{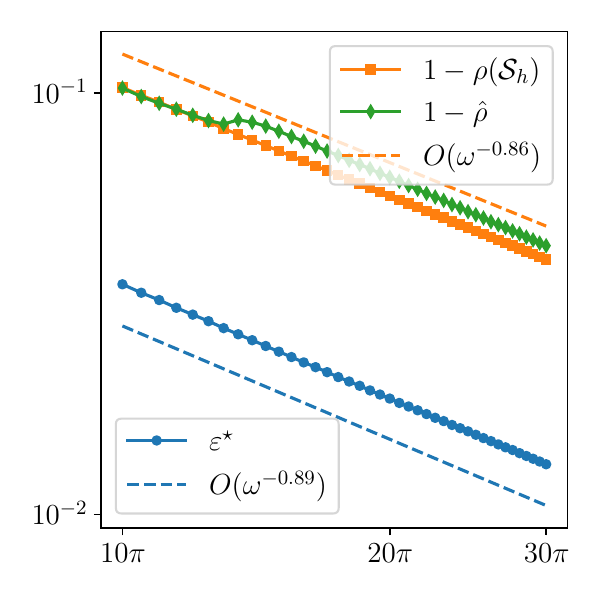}
    \includegraphics[width=0.49\textwidth]{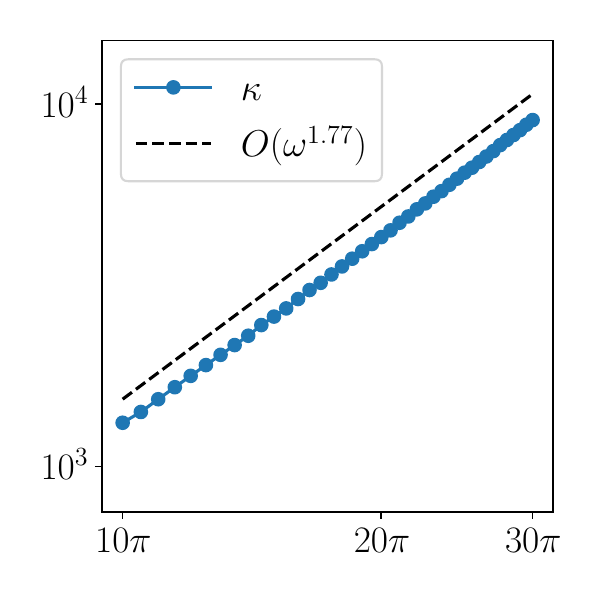} \\
    \includegraphics[width=0.49\textwidth]{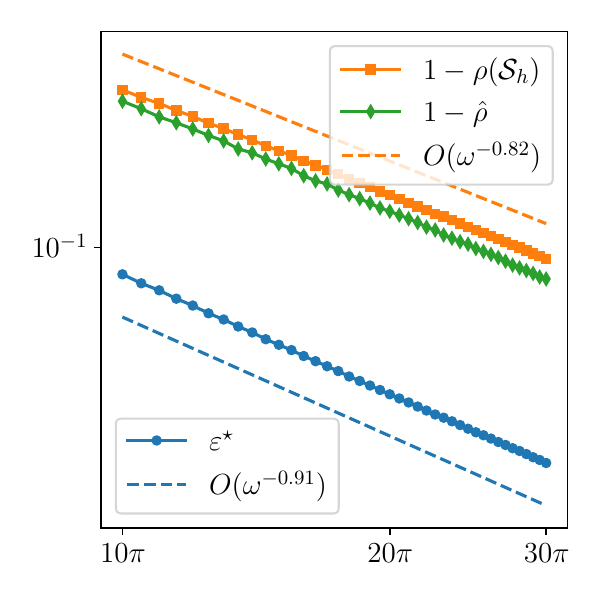}
    \includegraphics[width=0.49\textwidth]{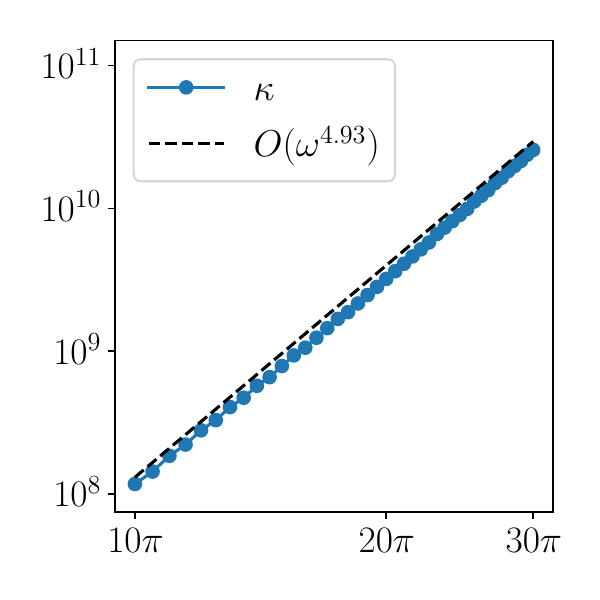}
    \caption{We display $\varepsilon$ vs. $\omega$ (left column), and condition number $\kappa(R)$ vs. $\omega$ (right column). To top row corresponds to $P = 1$, and the bottom row to $P = 2$.
    \label{fig:dg1d-kappa}}
\end{figure}

In Figure \ref{fig:dg1d-kappa} we plot $\varepsilon$, $1-\rho(\Sw_h)$, and $1-\hat{\rho}$ for uniformly spaced values of $\omega$. Additionally, we plot the condition number of the eigenvector matrix. We see that the precise convergence rate $\rho(\Sw_h)$ is slightly better than the estimate and gets closer to 1 sublinearly for both $P=1$ and $P=2$ as $\omega$ increases. The condition number $\kappa(R)$ appears to be bounded by a polynomial in $\omega$ for both $P=1$ and $P=2$. Despite the poor scaling of $\kappa(R)$ which from Figure \ref{fig:dg1d-kappa} can be overestimated as $O(\omega^5)$ for $P=2$, the convergence rate is still much faster that for the FD case as seen in Figure \ref{fig:dg1d-error}. For example, for $\omega = 10\pi$, the FD error reaches $10^{-8}$ in roughly 300 iterations where as the DG error takes roughly 200 iterations with $P=1$, and 100 iterations with $P=2$.

\begin{figure}[]
    \centering
    \includegraphics[width=0.49\textwidth]{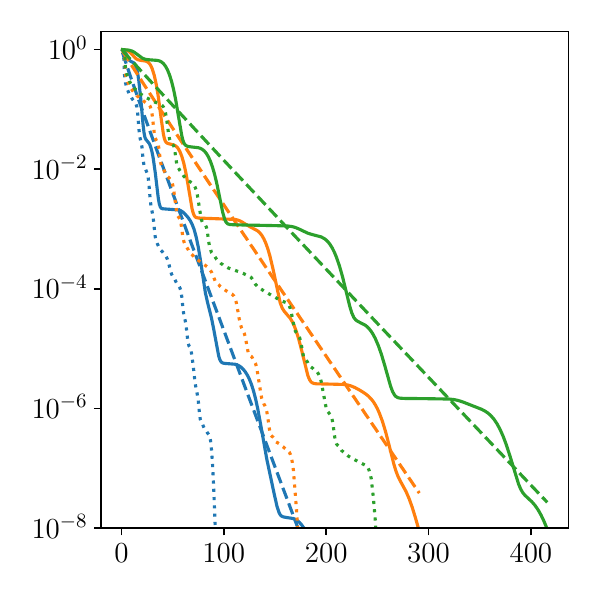}
    \includegraphics[width=0.49\textwidth]{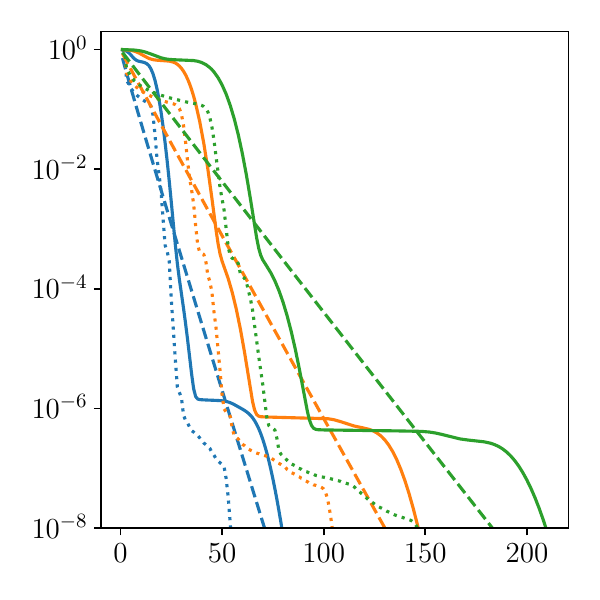}
    \caption{The figures show the relative error (solid), the eigenvector coefficient error (dashed), and the GMRES-accelerated error (dotted), with blue, orange, and green representing $\omega=10\pi, 20\pi, 30\pi$.
    \label{fig:dg1d-error}}
\end{figure}

In Figure \ref{fig:dg1d-error}, we also display the convergence rate in terms of $\hat{e}_h$, $c_h$, and error of the GMRES-accelerated iteration for the same initial condition \eqref{eq: fd1d IC} as for the FD example. We observe $P = 2$ converges roughly twice as fast as $P = 1$. For $P = 1$, GMRES appears to require roughly half as many iterations, while for $P = 2$, the acceleration is less noticeable, but still requiring fewer than $\approx 75\%$ the iterations of the fixed-point iteration.

\subsection{A Numerical Experiment in Two Dimensions} \label{sec:num2d}
We now consider the Helmholtz equation \eqref{eq: basic Helmholtz} in two dimensions
on the domain $\Omega = (-1,1)\times (-1,1)$.
We specify Neumann boundary conditions at $x = -1$ and $y = -1$ and outflow boundary conditions at $x = 1$ and $y = 1$. This problem setup does not support trapped waves. We place a ``point''-source forcing term centered at $(-0.7, -0.1)$
\[ f(x, y) = \pi^{-1}\omega^2 e^{-\omega^2[(x+0.7)^2 + (y+0.1)^2]}. \]
The modulus of the solution produced by the finite difference method is plotted in Figure \ref{fig:2d-sol} for $\omega=10\pi, 20\pi, $ and $30\pi$.

Due to the number of degrees of freedom needed to resolve the solution, computing the eigenvalues and eigenvectors is computationally very difficult for even moderate frequencies $\omega$. Instead, we will examine the iteration directly. 
We start by analyzing the convergence behaviour by looking at the relative residual of the iteration:
\[ \mathrm{res}^{(n)} = \frac{\|\hat{w}_h^{(n)} - \hat{w}_h^{(n-1)}\|}{\|\hat{w}_h^{(1)} - \hat{w}_h^{(0)}\|} = \frac{\|\hat{w}_h^{(n-1)} - \Pi_h\hat{w}_h^{(n-1)}\|}{\|\hat{w}_h^{(0)} - \Pi\hat{w}_h^{(0)}\|}. \]
The residual $\mathrm{res}^{(n)}$ is only an estimate of the relative error $\|\hat{e}_h^{(n)}\| / \|\hat{e}_h^{(0)}\|$ but is a more practical metric since we do not assume we have access to $\hat{w}^\star_h$ during the iteration.
For these examples, we specify a tolerance of $10^{-6}$.

\begin{figure}[]
   \centering
   \includegraphics[width=0.3\textwidth]{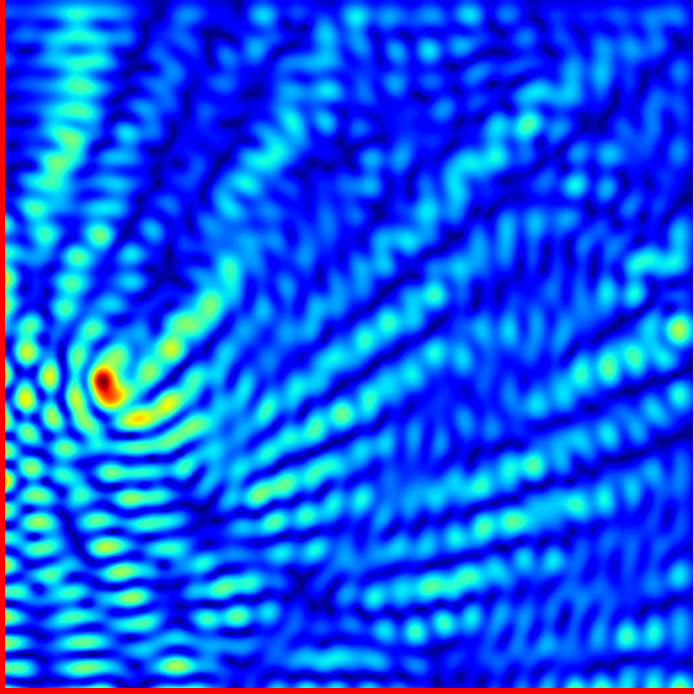}
   \includegraphics[width=0.3\textwidth]{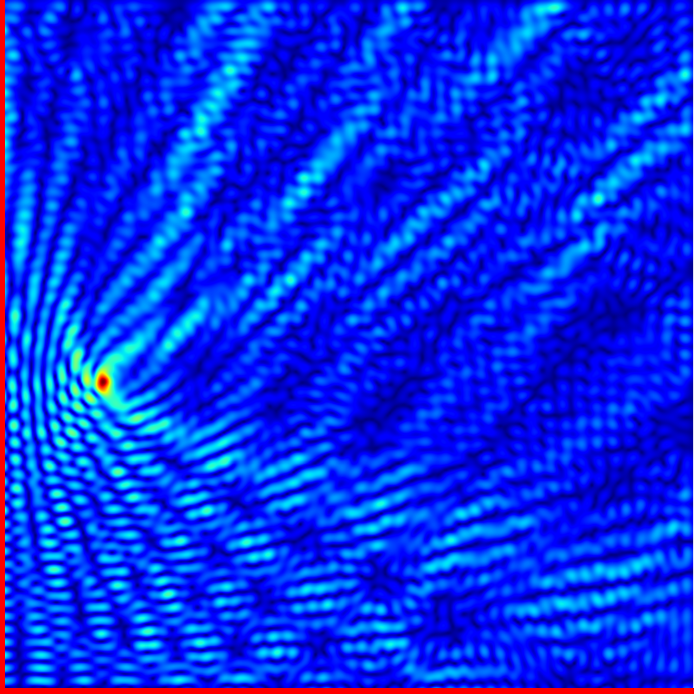}
   \includegraphics[width=0.3\textwidth]{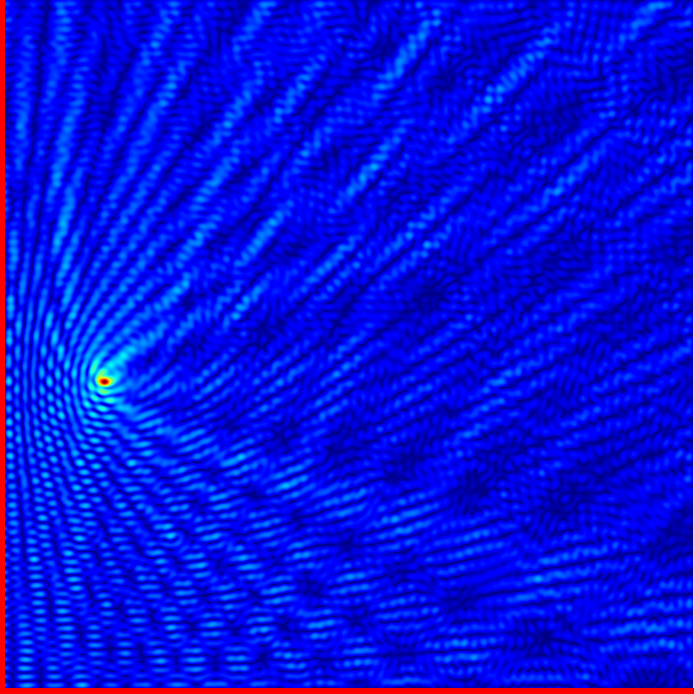}
   \caption{The modulus of the solution to the Helmholtz problem for $\omega = 10\pi, 20\pi,$ and $30\pi$ from left to right. The red boundaries are Neumann boundary conditions.
   \label{fig:2d-sol}}
\end{figure}

In Figure \ref{fig:2d-omega-iter}, we display results for uniformly spaced frequencies between $10\pi$ and $30\pi$, for both the FD and DG methods.
This time, for the DG scheme, instead of the central flux we use the upwind flux
\[
q^{\sharp} = \frac{1}{2}(q^{-} + q^{+}) + \frac{1}{2}\mathbf{n}\cdot(\mathbf{v}^{-} - \mathbf{v}^{+}), \qquad
(\mathbf{v} \cdot \mathbf{n})^{\sharp} = \frac{1}{2}\mathbf{n}\cdot(\mathbf{v}^{+} + \mathbf{v}^{-}) + \frac{1}{2}(q^{-} - q^{+}).
\]
The DG scheme with the upwind flux is not diagonalizable (in contrast with the central flux consider for the one dimensional example), and as pointed out in the discussion of Theorem \ref{maintheorem} in Section \ref{sec:conv-theory} it is therefore harder to estimate the convergence rates. Nevertheless for this example, the WaveHoltz iteration converges as would be expected with a diagonalizable scheme.

In particular, we plot the number of iterations to reach the specified relative residual of $10^{-6}$. For the FD method, the number of iterations $N\sim O(\omega^{0.79})$ in this range of frequencies which is consistent with the one dimensional test problem. For the two DG schemes, the number of iterations scales like $N\sim O(\omega)$ in this range of frequencies. This is slightly worse than observed in the one dimensional test problem but consistent with the discussion in Section \ref{sec:conv-theory}.
From Figure \ref{fig:2d-omega-iter}, we observe that the lower order $P=1$ converges faster than $P=2$. We speculate that this difference is because the upwind scheme is  fairly dissipative (more so for lower order) which further shifts the real part of the eigenvalues away from the real axis, and therefore allows faster convergence of the WaveHoltz iteration. 

Finally we note that in \cite{WaveHoltz} (Section 4.2.1) we presented numerical experiments illustrating the convergence of WaveHoltz accelerated with GMRES in trapping and non-trapping geometry. While the theory presented here is for WaveHoltz without GMRES acceleration, the results in \cite{WaveHoltz} follow the same pattern with the number of iterations scaling like $\mathcal{O}(\omega)$ or faster for non-trapping geometries, and $\mathcal{O}(\omega^p)$ with $1<p<2$ for trapping geometry.    

\begin{figure}[]
    \centering
    \includegraphics[width=0.49\textwidth]{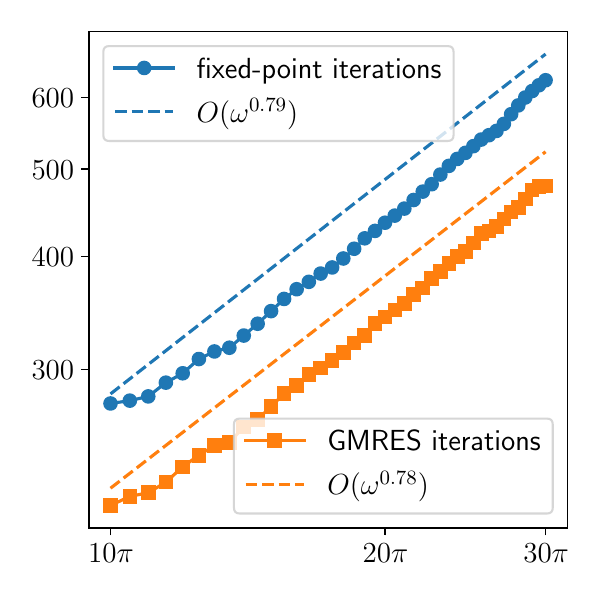}
    \includegraphics[width=0.49\textwidth]{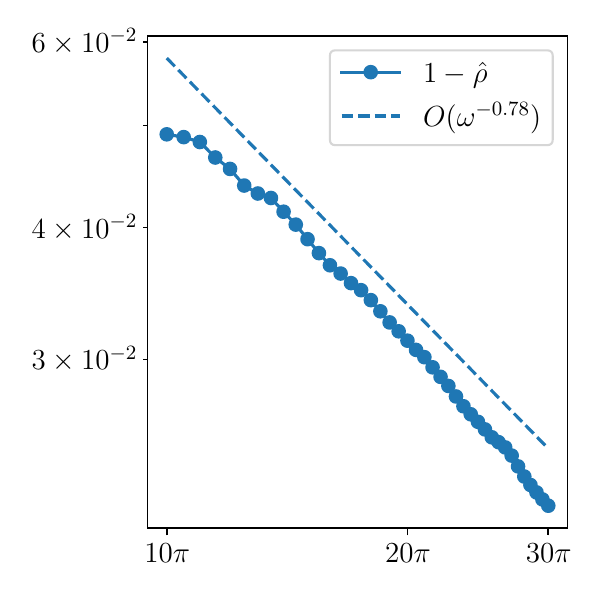} \\
    \includegraphics[width=0.49\textwidth]{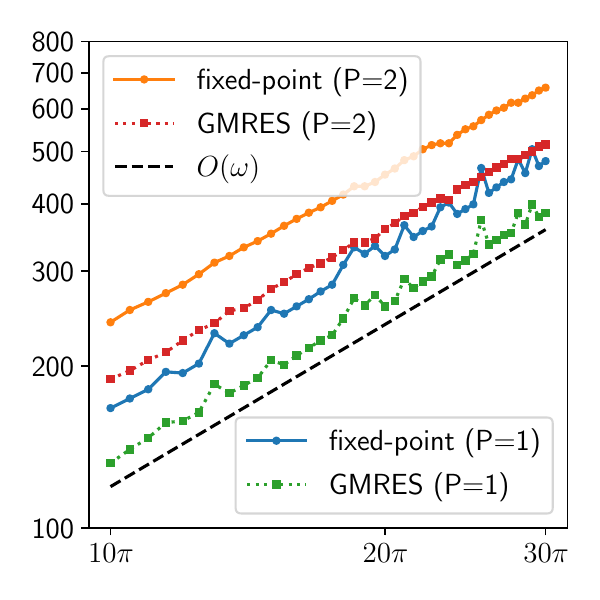}
    \includegraphics[width=0.49\textwidth]{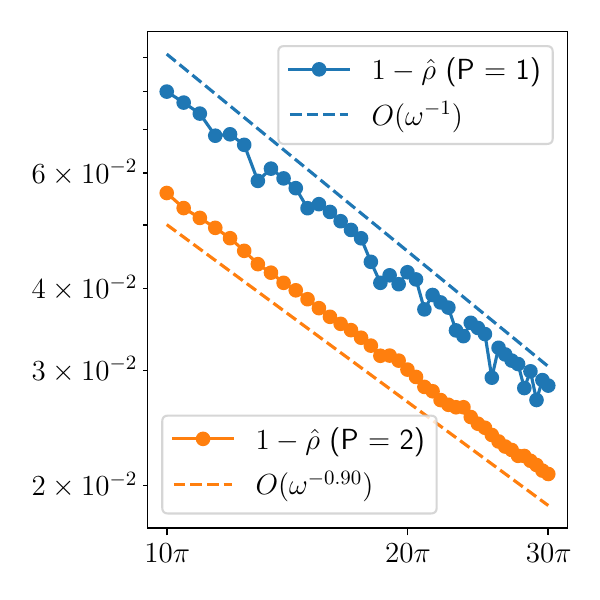}
    \caption{We display the number of iterations needed to reach a relative residual of $10^{-6}$ vs. $\omega$ (left column), and the estimated convergence rate $\hat{\rho}$ (right column). The top row corresponds to the FD discretization, and the bottom row to the DG discretization.}
    \label{fig:2d-omega-iter}
\end{figure}

\section{Conclusions}
We generalized previous results on the convergence of the WaveHoltz iteration to problems obtained by applying the WaveHoltz algorithm to the wave equation discretized in space. We proved that this semi-discrete WaveHoltz algorithm converges to an approximate solution of the Helmholtz equation for any stable discretization (regardless of boundary conditions). We estimated the convergence rate of the iteration as the (parabolic) distance of the eigenvalues of the spatial discretization matrix from $\pm i\omega$. This estimate was found to be modest compared to the true convergence rate for the numerical examples considered. We believe that tighter bounds on the convergence rate are possible, but this analysis is left to the future. We presented numerical examples verifying the assumptions of the main theorem. An immediate future extension of the results here are to consider the convergence rate of the WaveHoltz iteration when accelerated with Krylov methods and time discrete schemes. 

\section{Acknowledgments}
This material is based upon work supported by the National Science Foundation under Grant Number DMS-2345225 DMS-2436319 and Virginia Tech. (DA). Part of this material is based upon work supported by the National Science Foundation under Grant No. DMS-1928930 while DA was in residence at the Simons Laufer Mathematical Sciences Institute in Berkeley, California, during the Fall 2025 semester. Any opinions, findings, and conclusions or recommendations expressed in this material are those of the author(s) and do not necessarily reflect the views of the National Science Foundation.

\appendix
\section{WaveHoltz with Real Valued Wave Equation} \label{apx:real-valued-wh}
Consider the real valued wave equation in conservative form
\begin{subequations} \label{eq:WAVE_EQuation_cons appendix}
\begin{align}
    p_t + \nabla \cdot \mathbf{u} = -\frac{1}{\omega}f(x)\sin(\omega t), \ \
    \mathbf{u}_t + \nabla p = 0.
\end{align}
\end{subequations}
The corresponding complex valued Helmholtz equation is
\begin{subequations} \label{eq: helmholtz cons}
\begin{align}
    i\omega \hat{p} + \nabla \cdot \hat{\mathbf{u}} = -\frac{1}{i\omega}f(x), \ \
    i\omega \hat{\mathbf{u}} + \nabla \hat{p} = 0.
\end{align}
\end{subequations}
When eliminating $\hat{\mathbf{u}}$ we find
\[ \Delta \hat{p} + \omega^2 \hat{p} = f. \]
Which is the target Helmholtz equation.
Taking the real and imaginary parts of equation \ref{eq: helmholtz cons} results in the equations 
\begin{align*}
    &-\omega \Im\{\hat{p}\} + \Re\{\nabla\cdot\hat{\mathbf{u}}\} = 0, &
    \omega \Re\{\hat{p}\} + \Im\{\nabla\cdot\hat{\mathbf{u}}\} = \frac{1}{\omega}f, \\
    &-\omega \Im\{\hat{\mathbf{u}}\} + \Re\{\nabla \hat{p}\} = 0, &
    \omega \Re\{\hat{\mathbf{u}}\} + \Im\{\nabla \hat{p}\}  = 0.
\end{align*}
If we look for solutions to wave equation \eqref{eq:WAVE_EQuation_cons} of the form
\begin{align*}
    p &= p_0 \cos(\omega t) - p_1 \sin(\omega t), \\
    \mathbf{u} &= \mathbf{u}_0 \cos(\omega t) - \mathbf{u}_1 \sin(\omega t),
\end{align*}
then,
\begin{align*}
    -\omega p_0 \sin(\omega t) - \omega p_1 \cos(\omega t) + \nabla\cdot\mathbf{u}_0\cos(\omega t) - \nabla\cdot \mathbf{u}_1 \sin(\omega t) &= \frac{1}{\omega}f \sin(\omega t) \\
    -\omega \mathbf{u}_0 \sin(\omega t) - \omega \mathbf{u}_1 \cos(\omega t) + \nabla p_0 \cos(\omega t) - \nabla p_1 \sin(\omega t) &= 0.
\end{align*}
By matching the cosine and sine terms, we find that
\begin{align*}
   & -\omega p_1 + \nabla\cdot\mathbf{u}_0 = 0, & \omega p_0 + \nabla\cdot\mathbf{u}_1 = \frac{1}{\omega}f, \\
  &  -\omega \mathbf{u}_1 + \nabla p_0 = 0, & \omega \mathbf{u}_0 + \nabla p_1 = 0.
\end{align*}
These equations are the same as those for the real and imaginary parts of $\hat{p}$ and $\hat{\mathbf{u}}$, so we can conclude
\[ \hat{p} = p_0 + i p_1 \qquad \hat{\mathbf{u}} = \mathbf{u}_0 + i \mathbf{u}_1. \]
Moreover, we use the equations above to find
\[ p_1 = \frac{1}{\omega}\nabla\cdot \mathbf{u}_0, \]
Noting that $p(0,x) = p_0$ and $\mathbf{u}(0,x) = \mathbf{u}_0$, we find that $(p_0, \mathbf{u}_0)$ is a fixed point of $(p, \mathbf{u}) = \Pi (p,\mathbf{u})$ so we can simply apply the WaveHoltz iteration with real function $p$ and $\mathbf{u}$ using the time domain equations \eqref{eq:WAVE_EQuation_cons}, and extract the Helmholtz solution as
\[ \hat{p} = p - \frac{1}{i\omega}\nabla \cdot \mathbf{u}. \]

\bibliographystyle{siam}
\bibliography{references}

@article{multiWHI,
      title={A Multi-Frequency {H}elmholtz Solver Based on the {W}ave{H}oltz Algorithm}, 
      author={Daniel Appel\"{o} and Francis Appiah and Jeffrey W. Banks and Cassandra Carrick and William D. Henshaw and Donald W. Schwendeman},
      year={2025},
      journal={Submitted (arXiv:2507.23613)}
}

@article{EigenWave,
      title={EigenWave: An Optimal {O(N)} Method for Computing Eigenvalues and Eigenvectors by Time-Filtering the Wave Equation}, 
      author={Daniel Appel\"{o} and Jeffrey W. Banks and William D. Henshaw and Ngan Le and Donald W. Schwendeman},
      year={2025},
      journal={Journal of Computational Physics (submitted)}
}

@article{WDHthumb,
      title={A Rule of Thumb for Choosing Points-Per-Wavelength for Finite Difference Approximations of {H}elmholtz Problems}, 
      author={Daniel Appel\"{o} and Jeffrey W. Banks and William D. Henshaw and Donald W. Schwendeman},
      year={2025},
      journal={Journal of Computational Physics (submitted)}
}

@article{appelo2025optimalonhelmholtzsolver,
      title={An Optimal {O(N)} {H}elmholtz Solver for Complex Geometry using {W}ave{H}oltz and Overset Grids}, 
      author={Daniel Appel\"{o} and Jeffrey W. Banks and William D. Henshaw and Donald W. Schwendeman},
      year={2025},
      journal={arXiv 2504.03074},
}

@article{peng2021emwaveholtz,
  title={{EM-WaveHoltz}: A flexible frequency-domain method built from time-domain solvers}, 
  author={Zhichao Peng and Daniel Appel\"{o}},
  journal = 	 {{IEEE} Transactions on Antennas and Propagation},
  year=2022,
  volume=70,
  number=7,
  pages={5659-5671}
}

@article{garcia2022extensions,
  title={Extensions and analysis of an iterative solution of the {H}elmholtz equation via the wave equation},
  author={Garcia, Fortino and Appel{\"o}, Daniel and Runborg, Olof},
  journal={arXiv preprint arXiv:2205.12349},
  year={2022}
}

@article{grote2019controllability,
  title={On controllability methods for the {H}elmholtz equation},
  author={Grote, M.J. and Tang, J.H.},
  journal={Journal of Computational and Applied Mathematics},
volume = 358,
pages = "306-326",
  year=2019,
  publisher={Elsevier}
}

@article{GROTE2020112846,
title = {Parallel controllability methods for the {H}elmholtz equation},
journal = {Computer Methods in Applied Mechanics and Engineering},
volume = {362},
pages = {112846},
year = {2020},
author = {Marcus J. Grote and Frédéric Nataf and Jet Hoe Tang and Pierre-Henri Tournier}}

@article{bristeau1998controllability,
  title={Controllability methods for the computation of time-periodic solutions; application to scattering},
author={Bristeau, M.O. and Glowinski, R. and P{\'e}riaux, J.},
  journal={Journal of Computational Physics},
  volume=147,
  number=2,
  pages={265--292},
  year=1998,
  publisher={Elsevier}
}

@article{GlowRoss06,
author = {R. Glowinski and T. Rossi},
title = {A mixed formulation and exact controllability approach for the computation of the periodic solutions of the scalar wave equation. (i): Controllability problem formulation and related iterative solution},
journal = {Comptes Rendus Math.},
volume = 343,
number = 7,
pages = {493-498},
year = 2006
}

@article{BEMReview,
    author = {Liu, Y. J. and Mukherjee, S. and Nishimura, N. and Schanz, M. and Ye, W. and Sutradhar, A. and Pan, E. and Dumont, N. A. and Frangi, A. and Saez, A.},
    title = "{Recent Advances and Emerging Applications of the Boundary Element Method}",
    journal = {Applied Mechanics Reviews},
    volume = {64},
    number = {3},
    pages = {030802},
    year = {2012},
    month = {03},
    issn = {0003-6900},
    doi = {10.1115/1.4005491},
    url = {https://doi.org/10.1115/1.4005491},
    eprint = {https://asmedigitalcollection.asme.org/appliedmechanicsreviews/article-pdf/64/3/030802/6073565/amr\_64\_3\_030802.pdf},
}

@article{Gillman2015,
	Abstract = {This paper presents a direct solution technique for the scattering of time-harmonic waves from a bounded region of the plane in which the wavenumber varies smoothly in space. The method constructs the interior Dirichlet-to-Neumann (DtN) map for the bounded region via bottom-up recursive merges of (discretization of) certain boundary operators on a quadtree of boxes. These operators take the form of impedance-to-impedance (ItI) maps. Since ItI maps are unitary, this formulation is inherently numerically stable, and is immune to problems of artificial internal resonances. The ItI maps on the smallest (leaf) boxes are built by spectral collocation on tensor-product grids of Chebyshev nodes. At the top level the DtN map is recovered from the ItI map and coupled to a boundary integral formulation of the free space exterior problem, to give a provably second kind equation. Numerical results indicate that the scheme can solve challenging problems 70 wavelengths on a side to 9-digit accuracy with 4 million unknowns, in under 5 min on a desktop workstation. Each additional solve corresponding to a different incident wave (right-hand side) then requires only 0.04 s.},
	Author = {Gillman, A. and Barnett, A.H. and Martinsson, P-G.},
	Date-Added = {2018-11-25 22:44:12 +0100},
	Date-Modified = {2018-11-25 22:44:12 +0100},
	Day = 01,
	Doi = {10.1007/s10543-014-0499-8},
	Issn = {1572-9125},
	Journal = {BIT Numerical Mathematics},
	Month = {Mar},
	Number = 1,
	Pages = {141--170},
	Title = {A spectrally accurate direct solution technique for frequency-domain scattering problems with variable media},
	Url = {https://doi.org/10.1007/s10543-014-0499-8},
	Volume = 55,
	Year = 2015,
	Bdsk-Url-1 = {https://doi.org/10.1007/s10543-014-0499-8}}

@article{2011_deHoop_Xia,
	Author = {Wang, S. and de Hoop, M.V. and Xia, J.},
	Date-Added = {2018-11-25 20:36:13 +0100},
	Date-Modified = {2018-11-25 20:36:13 +0100},
	Doi = {10.1111/j.1365-2478.2011.00982.x},
	Issn = {1365-2478},
	Journal = {Geophysical Prospecting},
	Keywords = {{H}elmholtz, HSS structure, massively parallel, modeling, multifrontal, rectangular domain},
	Number = {5},
	Pages = {857--873},
	Publisher = {Blackwell Publishing Ltd},
	Title = {On 3D modeling of seismic wave propagation via a structured parallel multifrontal direct {H}elmholtz solver},
	Url = {http://dx.doi.org/10.1111/j.1365-2478.2011.00982.x},
	Volume = {59},
	Year = {2011},
	Bdsk-Url-1 = {http://dx.doi.org/10.1111/j.1365-2478.2011.00982.x}}

@article{WaveHoltz,
volume = {42},
year = {2020},
abstract = {A new iterative method, the WaveHoltz iteration, for solution of the Helmholtz equation is presented. WaveHoltz is a fixed point iteration that filters the solution to the solution of a wave equation with time periodic forcing and boundary data. The WaveHoltz iteration corresponds to a linear and coercive operator which, after discretization, can be recast as a positive definite linear system of equations. The solution to this system of equations approximates the Helmholtz solution and can be accelerated by Krylov subspace techniques. Analysis of the continuous and discrete cases is presented, as are numerical experiments.},
author = {Appel\"o, Daniel and Garcia, Fortino and Runborg, Olof},
issn = {1064-8275},
journal = {SIAM Journal on Scientific Computing},
keywords = {Helmholtz ; Matematik ; Mathematics ; Natural Sciences ; Naturvetenskap ; positive definite ; wave equation},
language = {eng},
number = {4},
pages = {A1950-A1983},
title = {WaveHoltz: Iterative Solution of the {H}elmholtz Equation via the Wave Equation},
}

@article{ElWaveHoltz,
title = {{El-WaveHoltz}: A time-domain iterative solver for time-harmonic elastic waves},
journal = {Computer Methods in Applied Mechanics and Engineering},
volume = 401,
pages = 115603,
year = 2022,
  author = 	 {D. Appel\"{o} and F. Garcia and A. Alvarez Loya and O. Runborg}
  }

@techreport{Runge-Kutta4,
  title={Fourth-order {2N}-storage {R}unge-{K}utta schemes},
  author={Carpenter, Mark H and Kennedy, Christopher A},
  year={1994},
institution = {{NASA}}
}

@book{Kopriva-ImplementingSpectralMethods,
booktitle = {Implementing Spectral Methods for Partial Differential Equations},
author = {Kopriva, David A},
publisher = {Springer Netherlands},
series = {Scientific Computation},
title = {Implementing Spectral Methods for Partial Differential Equations: Algorithms for Scientists and Engineers},
year = {2009}
}

@book{HesthavenWarburton-NodalDG,
abstract = {This book discusses a family of computational methods, known as discontinuous Galerkin methods, for solving partial differential equations. While these methods have been known since the early 1970s, they have experienced an almost explosive growth interest during the last ten to fifteen years, leading both to substantial theoretical developments and the application of these methods to a broad range of problems. These methods are different in nature from standard methods such as finite element or finite difference methods, often presenting a challenge in the transition from theoretical developments to actual implementations and applications. This book is aimed at graduate level classes in applied and computational mathematics. The combination of an in depth discussion of the fundamental properties of the discontinuous Galerkin computational methods with the availability of extensive software allows students to gain first hand experience from the beginning without eliminating theoretical insight. Jan S. Hesthaven is a professor of Applied Mathematics at Brown University. Tim Warburton is an assistant professor of Applied and Computational Mathematics at Rice University.},
address = {New York, NY},
booktitle = {Nodal Discontinuous Galerkin Methods Algorithms, Analysis, and Applications},
edition = {1st ed. 2008.},
author = {Hesthaven, Jan S. and Warburton, Tim},
isbn = {0-387-72067-7},
keywords = {Numerical analysis},
language = {eng},
publisher = {Springer New York},
series = {Texts in Applied Mathematics},
title = {Nodal Discontinuous Galerkin Methods Algorithms, Analysis, and Applications },
year = {2008},
}

@incollection{Helmholtz-Difficult-Ernst-Gander,
abstract = {In contrast to the positive definite Helmholtz equation, the deceivingly similar looking indefinite Helmholtz equation is difficult to solve using classical iterative methods. Simply using a Krylov method is much less effective, especially when the wave number in the Helmholtz operator becomes large, and also algebraic preconditioners such as incomplete LU factorizations do not remedy the situation. Even more powerful preconditioners such as classical domain decomposition and multigrid methods fail to lead to a convergent method, and often behave differently from their usual behavior for positive definite problems. For example increasing the overlap in a classical Schwarz method degrades its performance, as does increasing the number of smoothing steps in multigrid. The purpose of this review paper is to explain why classical iterative methods fail to be effective for Helmholtz problems, and to show different avenues that have been taken to address this difficulty.},
author = {Ernst, O. G. and Gander, M. J.},
address = {Berlin, Heidelberg},
booktitle = {Numerical Analysis of Multiscale Problems},
copyright = {Springer-Verlag Berlin Heidelberg 2012},
isbn = {3642220606},
issn = {1439-7358},
keywords = {Coarse Grid ; Domain Decomposition Method ; Helmholtz Equation ; Multigrid Method ; Schwarz Method},
language = {eng},
pages = {325-363},
publisher = {Springer Berlin Heidelberg},
series = {Lecture Notes in Computational Science and Engineering},
title = {Why it is Difficult to Solve {H}elmholtz Problems with Classical Iterative Methods},
year = {2011},
}

@article{Advancements-Iterative-Helmholtz-Erlangga,
abstract = {In this paper we survey the development of fast iterative solvers aimed at solving 2D/3D Helmholtz problems. In the first half of the paper, a survey on some recently developed methods is given. The second half of the paper focuses on the development of the shifted Laplacian preconditioner used to accelerate the convergence of Krylov subspace methods applied to the Helmholtz equation. Numerical examples are given for some difficult problems, which had not been solved iteratively before.},
author = {Erlangga, Yogi A.},
address = {Dordrecht},
copyright = {CIMNE, Barcelona, Spain 2007},
issn = {1134-3060},
journal = {Archives of computational methods in engineering},
keywords = {Engineering ; Helmholtz equations ; Mathematical and Computational Engineering ; Methods ; Studies},
language = {eng},
number = {1},
pages = {37-66},
publisher = {Springer Netherlands},
title = {Advances in Iterative Methods and Preconditioners for the {H}elmholtz Equation},
volume = {15},
year = {2008},
}

@article{Domain-Decomp-Depres,
author={Depr\'{e}s, B.},
title={M\'{e}thodes de d\'{e}composition de demains pour les probl\`{e}ms de propagation
d‘ondes en r\'{e}gime harmonique},
year={1991},
journal={PhD thesis},
publisher={Universit\'{e} Paris IX Dauphine},
}

@article{Wave-Ray-Multigrid-Brandt-Livshits,
author={Brandt, A. and Livshits, I.},
journal={Electronic Transactions on Numerical Analysis},
number={6},
pages={162-181},
title={Wave-ray multigrid method for standing wave equations},
year={1997},
}

@article{Sweeping-Precond-Engquist-Ying,
abstract = {The paper introduces the sweeping preconditioner, which is highly efficient for iterative solutions of the variable‐coefficient Helmholtz equation including very‐high‐frequency problems. The first central idea of this novel approach is to construct an approximate factorization of the discretized Helmholtz equation by sweeping the domain layer by layer, starting from an absorbing layer or boundary condition. Given this specific order of factorization, the second central idea is to represent the intermediate matrices in the hierarchical matrix framework. In two dimensions, both the construction and the application of the preconditioners are of linear complexity. The generalized minimal residual method (GMRES) solver with the resulting preconditioner converges in an amazingly small number of iterations, which is essentially independent of the number of unknowns. This approach is also extended to the three‐dimensional case with some success. Numerical results are provided in both two and three dimensions to demonstrate the efficiency of this new approach. © 2011 Wiley Periodicals, Inc.},
author = {Engquist, Björn and Ying, Lexing},
address = {Hoboken},
copyright = {Copyright © 2010 Wiley Periodicals, Inc.},
issn = {0010-3640},
journal = {Communications on pure and applied mathematics},
keywords = {Algebra ; Approximation ; Convergence ; DOMAIN DECOMPOSITION METHOD; FAST DIRECT SOLVER; LINEAR-SYSTEMS; MULTIFRONTAL METHOD; 2 DIMENSIONS; SCATTERING; INDEFINITE; ALGORITHM; APPROXIMATION; EXTRAPOLATION ; Exact sciences and technology ; Finite differences and functional equations ; General mathematics ; General, history and biography ; Helmholtz equations ; Linear and multilinear algebra, matrix theory ; Matematik ; Mathematical analysis ; Mathematics ; Matrix ; Natural Sciences ; Naturvetenskap ; Partial differential equations ; Sciences and techniques of general use},
language = {eng},
number = {5},
pages = {697-735},
publisher = {Wiley Subscription Services, Inc., A Wiley Company},
title = {Sweeping preconditioner for the {H}elmholtz equation: Hierarchical matrix representation},
volume = {64},
year = {2011},
}

@article{Ra69,
author = {J. Ralston},
title = {Solutions of the Wave Equation with Localized Energy},
journal = {Comm. Pure Appl. Math.},
volume = 22,
pages = {807--823},
year = 1969,
}

@article{Mo66,
author = {C. Morawetz},
title = {Exponential Decay of Solutions of the Wave Equation},
journal = {Comm. Pure Appl. Math.},
volume = 19,
pages = {439--444},
year = 1966,
}

@book{LP68,
author = {P. D. Lax and R.S. Phillips},
title = {Scattering theory},
publisher = {Academic Press},
year = {1968},
}

@article{MoiolaSpence19,
author = {Moiola, Andrea and Spence, Euan A.},
title = {Acoustic transmission problems: Wavenumber-explicit bounds and resonance-free regions},
journal = {Mathematical Models and Methods in Applied Sciences},
volume = {29},
number = {02},
pages = {317-354},
year = {2019},

}

@article{GrahamEtAl2019,
title = {The {H}elmholtz equation in heterogeneous media: A priori bounds, well-posedness, and resonances},
journal = {Journal of Differential Equations},
volume = {266},
number = {6},
pages = {2869-2923},
year = {2019},
author = {I.G. Graham and O.R. Pembery and E.A. Spence},
}

@ARTICLE{Newmark1962,
  title     = "A method of computation for structural dynamics",
  author    = "Newmark, Nathan M",
  journal   = "Trans. Am. Soc. Civil Eng.",
  publisher = "American Society of Civil Engineers (ASCE)",
  volume    =  127,
  number    =  1,
  pages     = "1406--1433",
  month     =  jan,
  year      =  1962,
  language  = "en"
}

@ARTICLE{Gander2019-jy,
  title     = "A direct time parallel solver by diagonalization for the wave
               equation",
  author    = "Gander, Martin J and Halpern, Laurence and Rannou, Johann and
               Ryan, Juliette",
  journal   = "SIAM J. Sci. Comput.",
  publisher = "Society for Industrial \& Applied Mathematics (SIAM)",
  volume    =  41,
  number    =  1,
  pages     = "A220--A245",
  month     =  jan,
  year      =  2019,
  language  = "en"
}
\end{document}